\newcommand{\Stf}{\mathop{\rm Stab}\nolimits_F}
\newcommand{\Stg}{\mathop{\rm Stab}\nolimits_G}
\newtheorem{cor}{Corollary}
\newtheorem{prop}{Proposition}
\newtheorem{lemma}{Lemma}
\newtheorem{defin}{Definition}
\newcounter{thmcounter}
\newtheorem{theorem}[thmcounter]{Theorem}
\begin{document}
\title{Some graphs related to Thompson's group $F$}
\author{Dmytro Savchuk\thanks{Supported by NSF grants DMS-0600975 and
DMS-0456185}\\ Texas A\&M University}

\maketitle

\abstract{The Schreier graphs of Thompson's group $F$ with respect
to the stabilizer of $\frac12$ and generators $x_0$ and $x_1$, and
of its unitary representation in $L_2([0,1])$ induced by the
standard action on the interval $[0,1]$ are explicitly described.
The coamenability of the stabilizers of any finite set of dyadic
rational numbers is established. The induced subgraph of the right
Cayley graph of the positive monoid of $F$ containing all the
vertices of the form $x_nv$, where $n\geq0$ and $v$ is any word over
the alphabet $\{x_0, x_1\}$, is constructed. It is proved that the
latter graph is non-amenable.}

\section*{Introduction}

Thompson's group $F$ was discovered by Richard Thompson in 1965. A
lot of fascinating properties of this group were discovered later
on, many of which are surveyed nicely in~\cite{intro_tomp}. It is a
finitely presented torsion free group. One of the most intriguing
open questions about this group is whether $F$ is amenable.
Originally this question was asked by Geoghegan in 1979 (see p.549
of~\cite{gersten_s:comb_group_theory}) and since then dozens of
papers were in some extent devoted to it. It was shown
in~\cite{brin_s:piecewise} that $F$ does not contain a nonabelian
free subgroup and in~\cite{intro_tomp} that it is not elementary
amenable. So the question of amenability of $F$ is particularly
important because $F$ would be an example of a group given by a
balanced presentation (two generators and two relators) of either
amenable, but not elementary amenable group (the first finitely
presented example was constructed by R.Grigorchuk
in~\cite{grigorch:example}), or non-amenable group, which does not
contain a nonabelian free subgroup (the first finitely presented
example of this type was constructed by Ol$'$shanskii and Sapir
in~\cite{olshanskii_s:non-amenable}).

The study of the Schreier graphs of $F$ was also partially inspired
by the question of amenability of $F$. In particular, if any
Schreier graph with respect to any subgroup is non-amenable the
whole group $F$ would be non-amenable. Unfortunately, all Schreier
graphs we describe here are amenable which does not give any
information about the amenability of $F$. But the knowledge about
the structure of Schreier graphs provides some additional
information about $F$ itself.

It happens that the described Schreier graph of the action of $F$ on
the set of dyadic rational numbers on the interval $(0,1)$ is
closely related to the unitary representation of $F$ in the space
$B(L_2([0,1]))$ of all bounded linear operators on $L_2([0,1])$. It
reflects (modulo a finite part) the dynamics of $F$ on the Haar
wavelet basis in $L_2([0,1])$. We define the Schreier graph of the
group action on the Hilbert space with respect to some basis and
make this connection precise.

R.~Grigorchuk and S.~Stepin in~\cite{grigorch_s:amenab} reduced the
question of amenability of $F$ to the right amenability of the
positive monoid $P$ of $F$. Moreover, the amenability of $F$ is
equivalent to the amenability of the induced subgraph $\Gamma_P$ of
the Cayley graph $\Gamma_F$ of $F$ with respect to generating set
$\{x_0,x_1\}$ containing the positive monoid $P$. We construct the
induced subgraph $\Gamma_S$ of $\Gamma_F$ containing all the
vertices of the form $x_nv$ for $n\geq 0, v\in \{x_0,x_1\}^*$ and
prove that this graph is non-amenable. In this construction we use
the realization of the elements of the positive monoid of $F$ as
binary rooted forests. The existence of this representation was
originally noted by K.~Brown and developed by J.~Belk
in~\cite{belk:phd} and Z.~\v Suni\'c in~\cite{sunic:tamari}. It was
also used by J.~Donelly in~\cite{donnelly:ruinous} to construct an
equivalent condition for amenability of $F$.

The structure of the paper is as follows. In
Section~\ref{sec_thompson_group} the definition and the basic facts
about Thompson's group are given. Section~\ref{sec_schreier}
contains the description of the Schreier graph of the action of $F$
on the set of dyadic rational numbers from the interval $(0,1)$. The
coamenability of the stabilizers of any finite set of dyadic
rational numbers is shown in Section~\ref{sec_coamenab}. The
Schreier graph of the action of $F$ on $L_2([0,1])$ is constructed
in Section~\ref{sec_action_of_L2}. The last Section~\ref{sec_cayley}
contains a description of the subgraph $\Gamma_S$ of $\Gamma_P$ and
the proof that $\Gamma_S$ is non-amenable.

The author expresses warm gratitude to Rostislav Grigorchuk for
valuable comments and bringing his attention to Thompson's group,
and to Zoran \v Suni\'c, who has pointed to the connection with
forest diagrams, which simplified the proofs in the last section.

\section{Thompson's group}
\label{sec_thompson_group}

\begin{defin}
The \emph{Thompson's group} $F$ is the group of all strictly
increasing piecewise linear homeomorphisms from the closed unit
interval $[0,1]$ to itself that are differentiable everywhere except
at finitely many dyadic rational numbers and such that on the
intervals of differentiability the derivatives are integer powers of
$2$. The group operation is superposition of homeomorphisms.
\end{defin}

Basic facts about this group can be found in the survey
paper~\cite{intro_tomp}. In particular, it is proved that $F$ is
generated by two homeomorphisms $x_0$ and $x_1$ given by
$$x_0(t)=\left\{
\begin{array}{ll}
\frac t2,&0\leq t\leq\frac12,\\
t-\frac14,&\frac12\leq t\leq\frac34,\\
2t-1,&\frac34\leq t\leq1,\\
\end{array} \right.
\qquad x_1(t)=\left\{
\begin{array}{ll}
t,&0\leq t\leq\frac12,\\
\frac t2+\frac14,&\frac12\leq t\leq\frac34,\\
t-\frac18,&\frac34\leq t\leq\frac78,\\
2t-1,&\frac78\leq t\leq1.\\
\end{array} \right.
$$

The graphs of $x_0$ and $x_1$ are displayed in
Figure~\ref{fig_gens}.
\begin{figure}[h]
\begin{center}
\includegraphics{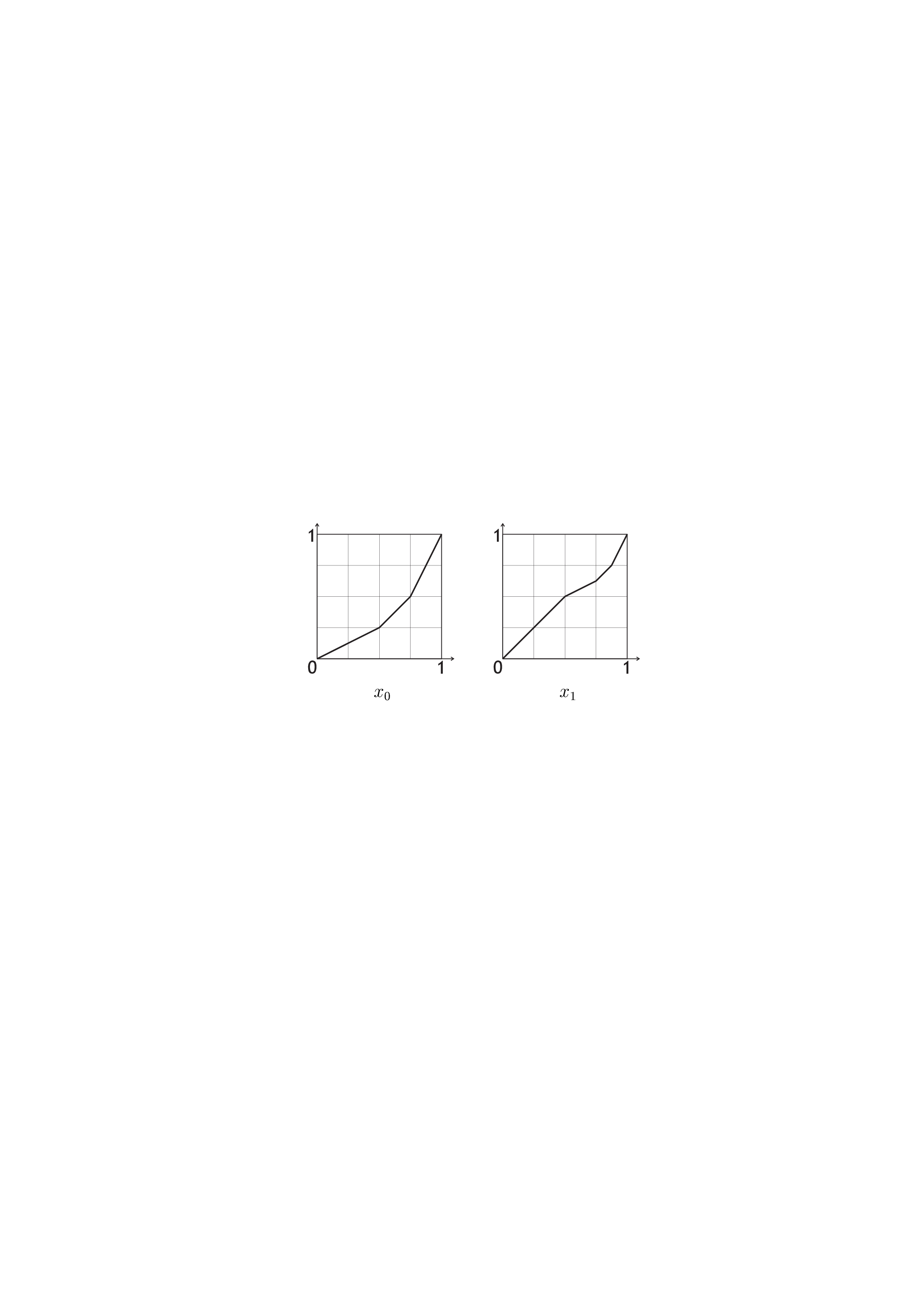}
\end{center}
\caption{Generators of $F$\label{fig_gens}}
\end{figure}

Throughout the paper we will follow the following conventions. For
any two elements $f$, $g$ of $F$ and any $x\in[0,1]$
\begin{equation}
\label{eqn_conv}
(fg)(x)=g(f(x)),\quad f^g=gfg^{-1}.
\end{equation}

With respect to the generating set $\{x_0,x_1\}$ $F$ is finitely
presented. But for some applications it is more convenient to
consider an infinite generating set $\{x_0,x_1,x_2,\ldots\}$, where
\[x_n=(x_1)^{x_0^{n-1}}.\]
With respect to this generating set (and with respect to
convention~\eqref{eqn_conv}) $F$ has a nice presentation
\begin{equation}
\label{eqn_presF}
F\cong\langle x_0,x_1,x_2,\ldots\ |\ x_kx_n=x_{n+1}x_k,\ 0\leq
k<n\rangle.
\end{equation}

\section{The Schreier graph of the action of $F$ on the set of dyadic rational numbers}
\label{sec_schreier}

Let $G$ be a group generated by a finite generating set $S$ acting
on the set $M$. The \emph{Schreier graph} $\Gamma(G,S,M)$ of the
action of $G$ on $M$ with respect to the generating set $S$ is an
oriented labelled graph defined as follows. The set of vertices of
$\Gamma(G,S,M)$ is $M$ and there is an arrow from $x\in M$ to $y\in
M$ labelled by $s\in S$ if and only if $x^s=y$.

For any subgroup $H$ of $G$, the group $G$ acts on the right cosets
in $G/H$ by right multiplication. The corresponding Schreier graph
$\Gamma(G,S,G/H)$ is denoted as $\Gamma(G,S,H)$ or just
$\Gamma(G,H)$ if the generating set is clear from the context.

Conversely, if $G$ acts on $M$ transitively, then $\Gamma(G,S,M)$ is
canonically isomorphic to $\Gamma(G,S,\Stg(x))$ for any $x\in M$,
where the vertex $y\in M$ in $\Gamma(G,S,M)$ corresponds to the
coset from $G/\Stg(x)$ consisting of all elements of  $G$ that move
$x$ to $y$.

Consider the subgroup $\Stf(\frac12)$ of $F$ consisting of all
elements of $F$ that fix $\frac12$. There is a natural isomorphism
$\psi:\Stf(\frac12)\to F\times F$ given by
\begin{equation}
\label{isom} \Stf\Bigl(\frac12\Bigr)\ni f(t)\stackrel{\psi}{\longmapsto} \left(2f\Bigl(\frac
t2\Bigr), 2\Bigl(f\Bigl(\frac{t+1}2\Bigr)-1\Bigr)\right)\in F\times F.
\end{equation}
This group was studied in~\cite{burillo:quasi-isom}, where it was
shown that it embeds into $F$ quasi-isometrically.

The Schreier graph $\Gamma(F,\{x_0,x_1\},\Stf(\frac12))$ coincides
with the Schreier graph of the action of $F$ on the orbit of
$\frac12$. Let $D$ be the set of all dyadic rational numbers from
the interval $(0,1)$. It is known that $F$ acts transitively on $D$
(which follows also from the next proposition). Therefore the latter
graph coincides with the Schreier graph $\Gamma(F,\{x_0,x_1\},D)$.

\begin{prop}
\label{shreier} The Schreier graph $\Gamma(F,\{x_0,x_1\},D)$ has the following structure (dashed arrows are
labelled by $x_0$ and solid arrows by $x_1$)

\begin{center}
\epsfig{file=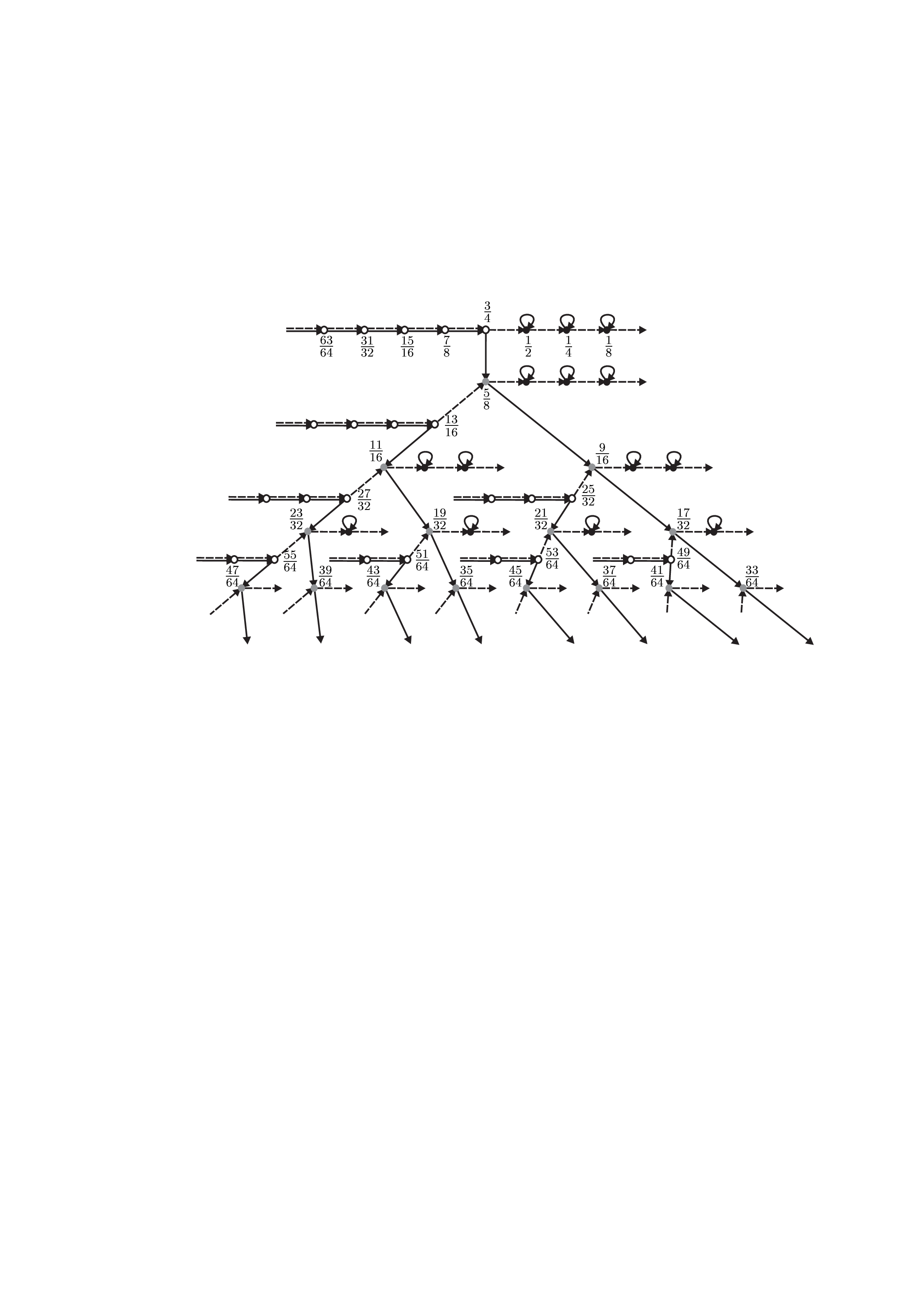,height=160pt}
\end{center}
\end{prop}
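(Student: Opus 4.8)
The plan is to exploit the fact that the Schreier graph of a transitive action on a \emph{set} is literally the orbit graph: since $F$ acts transitively on $D$, the graph $\Gamma(F,\{x_0,x_1\},D)$ has vertex set $D$, and for each $t\in D$ there is one dashed arrow $t\to x_0(t)$ and one solid arrow $t\to x_1(t)$. Thus the whole statement reduces to computing the two functions $t\mapsto x_0(t)$ and $t\mapsto x_1(t)$ on every dyadic rational and checking that the resulting adjacencies are exactly the ones drawn. Because $x_0$ and $x_1$ are piecewise linear with dyadic breakpoints and slopes that are powers of $2$, they map $D$ into $D$, so this is well posed and there is no coset bookkeeping to carry out.

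Next I would record each $t\in(0,1)$ by its terminating binary expansion $0.w$ and translate the piecewise formulas into combinatorial rules on the string. A direct computation from the definitions gives, for $x_0$,
$$0.v\mapsto 0.0v\ \ (t\in(0,\tfrac12]),\qquad 0.10w\mapsto 0.01w\ \ (t\in[\tfrac12,\tfrac34]),\qquad 0.11u\mapsto 0.1u\ \ (t\in[\tfrac34,1)),$$
so that $x_0$ prepends a $0$ on the left half, swaps the leading block $10$ for $01$ on the middle piece, and deletes one of two leading $1$'s on the right piece (the overlaps at $\tfrac12$ and $\tfrac34$ being consistent). The crucial structural point, which follows at once from the isomorphism $\psi$ of~\eqref{isom} (or by direct comparison of the formulas), is self-similarity: $x_1$ fixes every $t\le\tfrac12$ and acts on $t=0.1u\ge\tfrac12$ by applying exactly the $x_0$-rules above to the tail $u$. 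In particular every vertex $t\le\tfrac12$ carries a solid loop, and the $x_1$-dynamics on the right half reproduces the $x_0$-dynamics on the whole interval, rescaled.

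With these rules in hand I would assemble the graph by starting at the base vertex $\tfrac12$ and repeatedly applying $x_0^{\pm1}$ and $x_1^{\pm1}$, reading off the inverse rules as the reverse string operations (for instance $x_0^{-1}$ deletes a leading $0$, so $x_0^{-1}(0.0v)=0.v$). Transitivity of the action, which is known and guarantees that the graph is connected, ensures that this process reaches every vertex. The repeating pattern visible in the figure is then justified by induction on the length of the binary word: the local neighbourhood of a vertex depends only on its first one or two digits, and the self-similar $x_1$-rule reproduces the same fragment one scale deeper, so once the finitely many vertex types near $\tfrac12$ are matched against the picture, the remainder follows verbatim.

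The main obstacle is organizational rather than conceptual. One must handle the breakpoints $\tfrac12,\tfrac34,\tfrac78,\dots$ where two linear pieces meet, assigning each dyadic to the correct rule and using the canonical terminating binary form consistently; and one must phrase the induction so that it simultaneously accounts for the $x_0$-edges, the $x_1$-edges, and their inverses, thereby confirming that no further edges appear beyond those drawn. Once this bookkeeping is arranged, every individual adjacency in the figure is a one-line verification.
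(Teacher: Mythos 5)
Your argument is correct in substance but takes a genuinely different route from the paper's. The paper partitions $D$ into explicit families $A_n,B_n,C_n,D_n$ (odd-numerator dyadics of level $n$ lying in $(\frac12,\frac34)$, $(\frac34,\frac78)$, $(\frac12,\frac58)$, $(\frac58,\frac34)$), identifies each family with a layer of the figure, and computes images of these whole sets as affine maps: $x_0^{-1}(A_n)=B_{n+1}$, $x_1(A_n)=C_{n+1}$, $x_0^k(A_n)=2^{-k+1}(A_n-\frac14)$ for the black rays, and $f(B_n)=1-2^{-k}(1-B_n)$ for the white rays, the latter using that $x_0^{-1}$ and $x_1^{-1}$ agree on $[\frac34,1]$. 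You instead work pointwise with binary expansions, turning the generators into prefix-rewriting rules and exploiting the self-similarity $x_1|_{[\frac12,1]}\cong x_0$ coming from the isomorphism~\eqref{isom}. Both routes amount to the same elementary computation; the paper's set-level bookkeeping matches the figure's layers more directly and dispatches whole rays at once, while your string formalism makes the recursive structure explicit and localizes every verification to a bounded prefix, which is arguably easier to check exhaustively. Your forward rules for $x_0$ and the description of $x_1$ are correct.

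Two caveats. Your illustrative inverse rule is wrong as stated: $x_0^{-1}(0.0v)=0.v$ holds only when $0.0v\le\frac14$; the inverse of $x_0$ has breakpoints at $\frac14$ and $\frac12$, so for instance $x_0^{-1}(\frac38)=\frac58$, not $\frac34$. The fix is routine --- each of your three forward rules inverts only on its image interval --- and in fact, since each generator is a bijection of $D$, it suffices to verify the forward edges at every vertex; the in-edges come for free. Also, you invoke transitivity to guarantee that generation from $\frac12$ exhausts $D$; that is legitimate since transitivity is known independently (the paper cites it before the proposition), but note the paper also rederives transitivity as Corollary~\ref{trans}(a) from this proposition, so if that corollary is to remain noncircular you should either cite transitivity externally or observe directly that your rewriting rules reach every terminating binary word from $0.1$.
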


\begin{proof}
Define the following subsets of $D$.
$$A_n=\Bigl\{\frac k{2^n}\ \bigl|\ k\ \mathrm{is\ odd}\Bigr\}\cap\Bigl(\frac12,\frac34\Bigr), n\geq3$$
$$B_n=\Bigl\{\frac k{2^n}\ \bigl|\ k\ \mathrm{is\ odd}\Bigr\}\cap\Bigl(\frac34,\frac78\Bigr), n\geq4$$
$$C_n=A_n\cap\Bigl(\frac12,\frac58\Bigr),\ \  D_n=A_n\cap\Bigl(\frac58,\frac34\Bigr), n\geq4$$
On the graph above, $A_n$ represents the $(n-3)$-rd level of the
gray vertices in the binary tree; $B_n$ is the set of the white
vertices between levels $n-4$ and $n-3$ of the tree, adjacent to 2
gray vertices; $C_n$ and $D_n$ are the sets of the gray vertices of
the $(n-3)$-rd level having gray and white neighbors above
respectively.

\begin{figure}[h]
\begin{center}
\includegraphics{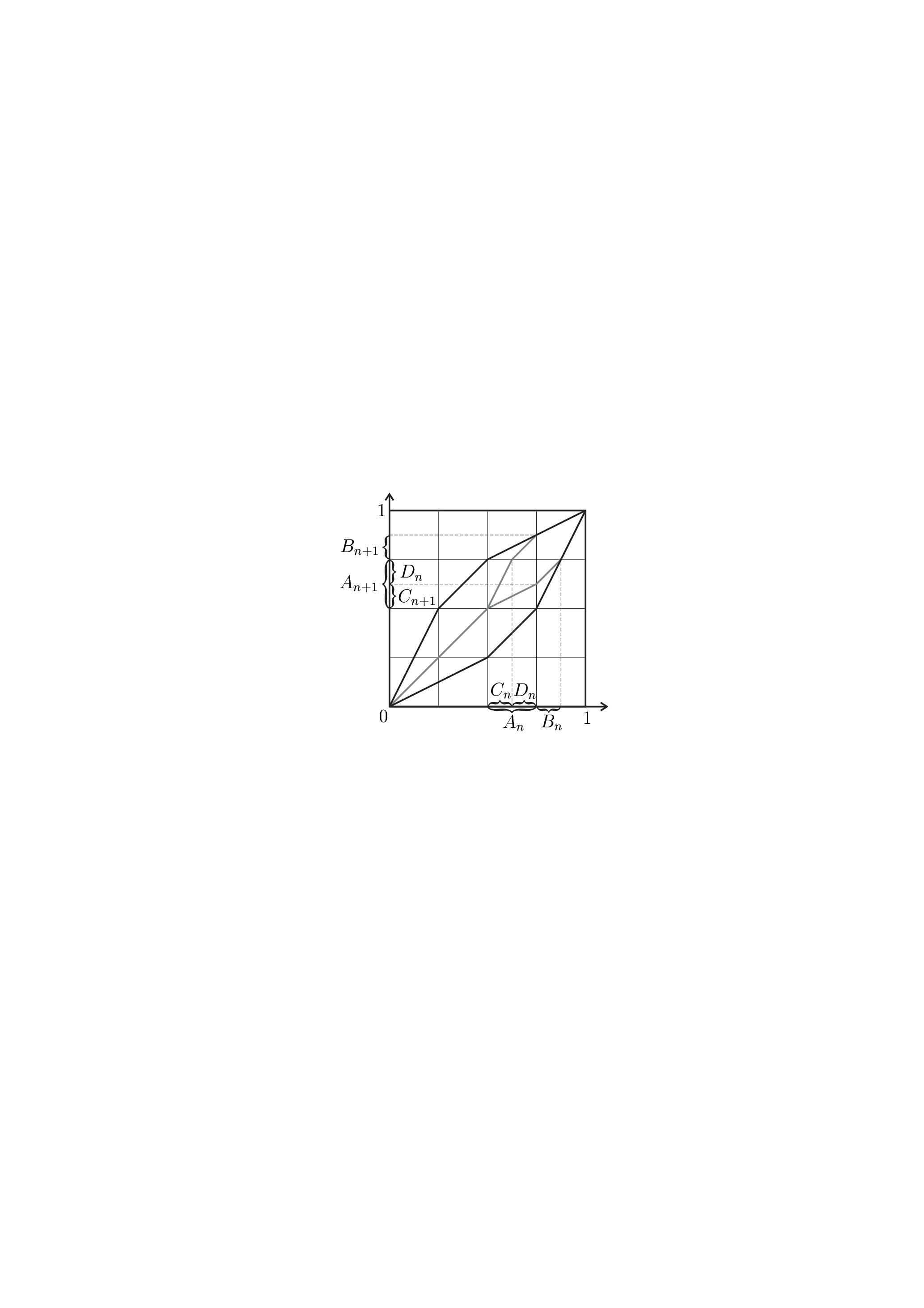}
\end{center}
\caption{Dynamics of $x_0$ and $x_1$\label{fig_proof}}
\end{figure}

Now we compute the action of $F$ on this subsets (see
Figure~\ref{fig_proof}). We have $x_0^{-1}(A_n)=B_{n+1}$,
$x_1(B_n)=D_n$, $x_1(A_n)=C_{n+1}$, hence
$(x_0^{-1}x_1)(A_n)=D_{n+1}$ and $(x_0^{-1}x_1)(A_n)\cup
x_1(A_n)=A_{n+1}$. Furthermore, for any set $A\subset\mathbb R$
denote $\alpha A+\beta=\{\alpha a+\beta: a\in A\}$. Then
$x_0^k(A_n)=x_0^kx_1(A_n)=2^{-k+1}(A_n-\frac14)$ for $k\geq1$. This
corresponds to the rays with the black vertices sticking out to the
right from the gray ones. On the other hand since the actions of
$x_0^{-1}$ and $x_1^{-1}$ on $[\frac34,1]$ coincide, for any element
$f$ of length $k\geq0$ from the monoid generated by $x_0^{-1}$ and
$x_1^{-1}$ we have $f(B_n)=1-2^{-k}(1-B_n)$.  This corresponds to
the rays with white vertices. There is one more geodesic line in the
graph corresponding to $\frac12$ which completes the picture.
\end{proof}

This graph gives alternative proofs of the following well-known
facts.

\begin{cor}
The subsemigroup of $F$ generated by $x_1$ and $x_0^{-1}x_1$ is
free.
\end{cor}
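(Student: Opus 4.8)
The plan is to exploit the tree structure of the gray vertices exhibited in the proof of Proposition~\ref{shreier}, showing that the two generators $a=x_1$ and $b=x_0^{-1}x_1$ act on this tree as the two descent maps onto children, so that distinct positive words trace distinct paths and hence give distinct endpoints.

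First I would record the relevant facts. Since $x_1$ and $x_0^{-1}x_1$ are restrictions of homeomorphisms of $[0,1]$, they are injective, and the identities $x_1(A_n)=C_{n+1}$ and $(x_0^{-1}x_1)(A_n)=D_{n+1}$ from the proof above show that they restrict to \emph{bijections} $A_n\to C_{n+1}$ and $A_n\to D_{n+1}$. Because $C_{n+1}\subset(\frac12,\frac58)$ and $D_{n+1}\subset(\frac58,\frac34)$ are disjoint and $A_{n+1}=C_{n+1}\sqcup D_{n+1}$, the set $\bigcup_{n\geq3}A_n$ of gray vertices carries the structure of a rooted binary tree: the root is the single point $A_3=\{\frac58\}$, the set $A_n$ is level $n-3$, and each vertex $q\in A_n$ has exactly two children, namely $x_1(q)\in C_{n+1}$ and $(x_0^{-1}x_1)(q)\in D_{n+1}$. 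I would then track the action of a word $w=s_1\cdots s_k$ with each $s_i\in\{a,b\}$ on the root: the point $w(\frac58)$ is obtained by descending one level at a time, passing at step $i$ to the $x_1$-child if $s_i=a$ and to the $(x_0^{-1}x_1)$-child if $s_i=b$; thus $w(\frac58)$ is exactly the tree vertex reached by the path encoded by $w$, lying in $A_{3+k}$.

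Finally I would conclude. In a rooted tree the map sending a finite descending path to its endpoint is injective, so distinct words give distinct endpoints: words of different lengths land in different levels $A_{3+k}$, while two words of equal length first diverge at some step, after which one image lies in a $C$-subtree and the other in the disjoint $D$-subtree and they never rejoin. Hence $w\ne w'$ forces $w(\frac58)\ne w'(\frac58)$, so $w$ and $w'$ are distinct elements of $F$; this means the canonical homomorphism from the free semigroup on $\{a,b\}$ into $F$ is injective, proving freeness. No serious obstacle arises, since the Schreier graph does the work; the only point deserving a moment's care is that the generators really send each single gray vertex to its two \emph{distinct} children, which is precisely the disjointness $C_{n+1}\cap D_{n+1}=\varnothing$ combined with the injectivity of $x_1$ and $x_0^{-1}x_1$.
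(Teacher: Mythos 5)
Your proof is correct and follows exactly the route the paper intends: the corollary is stated as an immediate consequence of the Schreier graph of Proposition~\ref{shreier}, and your argument just makes explicit the binary-tree structure of the gray vertices via the identities $x_1(A_n)=C_{n+1}$, $(x_0^{-1}x_1)(A_n)=D_{n+1}$ and the disjointness $A_{n+1}=C_{n+1}\sqcup D_{n+1}$ already established there. Nothing essential differs from the paper's (implicit) proof.
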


\begin{cor}
\label{trans}
\begin{itemize}
\item[{\rm (a)}] Thompson's group $F$ acts transitively on the set $D$ of all dyadic rationals from the
interval $(0,1)$.
\item[{\rm (b)}] $\Stf(\frac12)$ acts transitively on the sets of dyadic rationals from the intervals
$(0,\frac12)$ and $(\frac12,1)$.
\end{itemize}
\end{cor}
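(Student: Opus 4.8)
The plan is to read both parts off the explicit Schreier graph of Proposition~\ref{shreier}, using for part (b) the isomorphism $\psi\colon\Stf(\frac12)\to F\times F$ from~\eqref{isom}. The guiding principle for (a) is that the connected components of a Schreier graph are exactly the orbits of the action, so $F$ acts transitively on $D$ if and only if $\Gamma(F,\{x_0,x_1\},D)$ is connected. The picture in Proposition~\ref{shreier} is visibly a single connected graph, so once one knows that every dyadic rational of $(0,1)$ really occurs among its vertices, transitivity follows at once.

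Verifying that coverage is where the only genuine work lies. I would argue by locating an arbitrary dyadic, written as $k/2^n$ with $k$ odd, according to which subinterval it belongs to: those in $(\frac12,\frac34)$ are the gray vertices $A_n$, those in $(\frac34,\frac78)$ are the white vertices $B_n$; the dyadics of $(0,\frac12)$ are swept out by the black-vertex rays $x_0^k(A_n)=2^{-k+1}(A_n-\frac14)$, and those of $(\frac78,1)$ by the white-vertex rays $f(B_n)=1-2^{-k}(1-B_n)$; finally the exceptional points $\frac12$, the halves $2^{-k}$ and their mirrors $1-2^{-k}$, together with $\frac34$, lie on the single geodesic line through $\frac12$ that "completes the picture." The hard part of the whole corollary is precisely checking that these families overlap correctly and omit nothing, i.e. that the orbit computations of Proposition~\ref{shreier} tile all of $D$; the boundary dyadics that are powers of $2$ must be tracked carefully, since they fall on the geodesic rather than on the rays.

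For part (b) I would first note that any $f\in\Stf(\frac12)$ is an increasing homeomorphism fixing $0$, $\frac12$ and $1$, hence preserves each of $(0,\frac12)$ and $(\frac12,1)$; thus the dyadics of these two intervals are unions of $\Stf(\frac12)$-orbits and it suffices to establish transitivity on each separately. Here I would use that $\psi$ is surjective. Given a dyadic $p\in(0,\frac12)$, part (a) furnishes $g\in F$ with $g(\frac12)=2p$; setting $f=\psi^{-1}(g,\mathrm{id})\in\Stf(\frac12)$, a direct computation from~\eqref{isom} gives $f(\frac14)=p$. Hence every dyadic of $(0,\frac12)$ lies in the $\Stf(\frac12)$-orbit of the reference point $\frac14$, which forces transitivity on $(0,\frac12)\cap D$; the interval $(\frac12,1)$ is treated symmetrically through the second coordinate of $\psi$ with $\frac34$ in place of $\frac14$.

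Overall, the difficulty is concentrated entirely in the coverage bookkeeping of part (a); part (b) is then essentially formal, the only care needed being to track the affine rescalings $t\mapsto 2t$ and $t\mapsto 2t-1$ that identify the dyadics of $(0,\frac12)$ and $(\frac12,1)$ with those of $(0,1)$ via the two coordinates of $\psi$.
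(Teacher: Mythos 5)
Your proposal is correct and follows the paper's own (very terse) argument: part (a) is read off the connectedness and vertex coverage of the Schreier graph from Proposition~\ref{shreier}, and part (b) is deduced from part (a) together with the isomorphism~\eqref{isom}. You simply spell out the details (the coverage bookkeeping for (a) and the explicit computation $f(\frac14)=p$ for $f=\psi^{-1}(g,\mathrm{id})$ in (b)) that the paper leaves implicit.
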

\begin{proof}
Part (a) follows immediately from the structure of the Schreier
graph $F/\Stf(\frac12)$. Part (b) is a consequence of part (a) and
the isomorphism~\eqref{isom}.
\end{proof}

\begin{prop}
The subgroup $\Stf(\frac12)$ is a maximal subgroup in $F$.
\end{prop}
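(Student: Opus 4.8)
The plan is to show that the action of $F$ on $D$ is \emph{primitive}, which is equivalent to maximality of the point stabilizer $\Stf(\frac12)$. Since $F$ acts transitively on $D$ (Corollary~\ref{trans}(a)) with $\Stf(\frac12)$ the stabilizer of the point $\frac12$, the subgroups $K$ with $\Stf(\frac12)\le K\le F$ correspond bijectively to the blocks $B\subseteq D$ of the action that contain $\frac12$, via $K\mapsto K\cdot\frac12$. Thus it suffices to prove that the only blocks are the singletons and $D$ itself; equivalently, that every block $B$ with at least two elements equals $D$.

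The single tool I would use is the following strengthening of Corollary~\ref{trans}(b): for every $p\in D$, the stabilizer $\Stf(p)$ acts transitively on the dyadic rationals in $(0,p)$ and on those in $(p,1)$. This follows by conjugation: choosing (by transitivity) some $f\in F$ with $f(\frac12)=p$, the homeomorphism $f$ carries $(0,\frac12)$ and $(\frac12,1)$ onto $(0,p)$ and $(p,1)$, restricting to bijections on dyadics, and conjugates $\Stf(\frac12)$ onto $\Stf(p)$, so the claim is inherited from Corollary~\ref{trans}(b).

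First I would show that any block $B$ is order-convex in $D$: if $p<q$ lie in $B$ and $p<r<q$ is dyadic, I pick $g\in\Stf(p)$ with $g(q)=r$, which is possible by the one-sided transitivity above since $r,q\in(p,1)\cap D$. As $g$ fixes $p\in B$, the blocks $B$ and $gB$ meet, hence $gB=B$; since $q\in B$ this forces $r=g(q)\in B$. Next, assuming $B$ has at least two elements, I would show $\inf B=0$ and $\sup B=1$: if $\sup B=b<1$, I choose $p<q$ in $B$ and a dyadic $s\in(b,1)$, then take $g\in\Stf(p)$ with $g(q)=s$; again $gB=B$, so $s=g(q)\in B$, contradicting $s>b$. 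The argument for $\inf B=0$ is symmetric, using $\Stf(q)$ and transitivity on $(0,q)$. An order-convex subset of $D$ with infimum $0$ and supremum $1$ is all of $D$, so $B=D$.

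The only real obstacle is producing group elements with prescribed behaviour on one point while fixing another, and this is exactly what the one-sided transitivity of point stabilizers supplies; once that reduction is in place the remainder is short blockwise bookkeeping. Concluding, no nontrivial block exists, so the action of $F$ on $D$ is primitive and $\Stf(\frac12)$ is a maximal subgroup of $F$.
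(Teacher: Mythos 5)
Your argument is correct, but it takes a genuinely different route from the paper. The paper proves maximality directly: given any $f\notin\Stf(\frac12)$ and any $g\in F$, it uses the transitivity of $\Stf(\frac12)$ on the dyadics of each half-interval (Corollary~\ref{trans}(b)) to manufacture an element $\tilde f\in\langle\Stf(\frac12),f\rangle$ with $\tilde f(\frac12)=g(\frac12)$, whence $g\tilde f^{-1}\in\Stf(\frac12)$ and $g\in\langle\Stf(\frac12),f\rangle$ --- about five lines, using only the corollary as stated. You instead invoke the standard Galois correspondence between intermediate subgroups $\Stf(\frac12)\le K\le F$ and blocks of the transitive action on $D$ containing $\frac12$, and then rule out nontrivial blocks by showing every block with two points is order-convex and has infimum $0$ and supremum $1$. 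This requires one extra ingredient the paper does not state, namely the transitivity of $\Stf(p)$ on $(0,p)\cap D$ and $(p,1)\cap D$ for \emph{every} dyadic $p$, but your conjugation argument for this is sound (elements of $F$ preserve $D$ and conjugate stabilizers to stabilizers). What your approach buys is a stronger-sounding and more structural conclusion --- primitivity of the action on $D$, hence maximality of \emph{every} point stabilizer $\Stf(p)$ at once --- at the cost of importing the block/primitivity machinery; the paper's computation is more elementary and self-contained but yields only the single statement about $\Stf(\frac12)$ (though it conjugates to the general case just as easily). Both proofs ultimately rest on the same dynamical fact, the one-sided transitivity of point stabilizers.
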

\begin{proof}
Let $f$ be any element from $F\setminus\Stf(\frac12)$. Then for any
$g\in F$ we show that $g\in\langle\Stf(\frac12), f\rangle$. Let $g$
be an arbitrary element in $F$ that does not stabilize $\frac12$.

Denote $u=f(\frac12)$ and $v=g(\frac12)$. Without loss of generality
we may assume $u<\frac12$. Then by transitivity from
Corollary~\ref{trans}(b) there exists $h\in\Stf(\frac12)$ such that
either $h(f(\frac12))=v$ or $h(f^{-1}(\frac12))=v$ depending on
whether $v<\frac12$ or $v>\frac12$. In any case the element $\tilde
f=fh$ (or $\tilde f=f^{-1}h$) belongs to $\langle\Stf(\frac12),
f\rangle$ and satisfies $\tilde f(\frac12)=v$.

Now for $\tilde h=g\tilde f^{-1}$ we have $\tilde h(\frac12)=\tilde
f^{-1}(g(\frac12))=\tilde f^{-1}(v)=\frac12$. Thus $\tilde h\in
\Stf(\frac12)$ and $g=\tilde h\tilde f\in\langle\Stf(\frac12),
f\rangle$.
\end{proof}

Proposition~\ref{shreier} also yields a bound on the length of an
element. Namely, if the graph of an element $f\in F$ passes through
the point $(a,b)$ for some dyadic rational numbers $a$ and $b$, then
the length of $f$ with respect to the generating set $\{x_0,x_1\}$
is not smaller than the combinatorial distance between $a$ and $b$
in the graph $\Gamma(F,\{x_0,x_1\},D)$.

Estimates similar in spirit (also based on the properties of graph
of an element, but in a different realization of $F$) were used by
J.Burillo in~\cite{burillo:quasi-isom} to show that $\Stf(\frac12)$
quasi-isometrically embeds into $F$.

\vspace{.5cm}

\section{Coamenability of stabilizers of several dyadic
rationals}
\label{sec_coamenab}

In this section we show that for any finite subset
$\{d_1,\ldots,d_n\}$ of dyadic rationals the Schreier graphs of $F$
with respect to $\Stf(d_1,\ldots,d_n)$ is amenable, which,
unfortunately, does not give any information about amenability of
$F$.

First we recall the definition of an amenable graph.

\begin{defin}
Given an infinite graph $\Gamma=(V,E)$ of bounded degree \emph{the
Cheeger constant} $h(\Gamma)$ is defined as follows
$$h(\Gamma)=\inf_S\frac{|\partial S|}{|S|},$$
where $S$ runs over all nonempty finite subsets of $V$, and
$\partial S$, the boundary of $S$, consists of all vertices of
$V\setminus S$ that have a neighbor in $S$.
\end{defin}

\begin{defin}
The graph $\Gamma$ is called \emph{amenable} if $h(\Gamma)=0$.
\end{defin}

\begin{defin}
A subgroup $H$ of a group $G$ is called coamenable in $G$ if the
Schreier graph $\Gamma(G,H)$ is amenable.
\end{defin}

Note, that coamenability of a subgroup does not depend on the
generating set of $G$. This follows easily from Gromov's doubling
condition (see Theorem~\ref{thm_gromov} in
Section~\ref{sec_cayley}).

\begin{prop}
Let $\{d_1,\ldots,d_n\}\subset D$ be any finite subset of dyadic
rationals. Then the subgroup $\Stf(d_1,\ldots,d_n)$ of $F$
consisting of all elements stabilizing all the $d_i$'s is coamenable
in $F$.
\end{prop}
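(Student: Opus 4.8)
The plan is to induct on the number $n$ of stabilized points, using the product decomposition $\psi$ from~\eqref{isom} to trade the stabilizer of $n$ points for stabilizers of strictly fewer points. The naive idea — ``each $\Stf(d_i)$ is coamenable, hence so is their intersection'' — fails, since coamenability is \emph{not} closed under intersection. Instead I would invoke the following standard permanence properties, all phrased through amenability of the associated Schreier graphs (equivalently, invariance of means on the coset spaces): (i) coamenability is invariant under conjugation; (ii) it is upward closed, i.e.\ if $H\le K\le G$ and $H$ is coamenable in $G$ then so is $K$; (iii) it is transitive in towers, i.e.\ if $H\le K\le G$ with $H$ coamenable in $K$ and $K$ coamenable in $G$, then $H$ is coamenable in $G$; and (iv) $H_1\times H_2$ is coamenable in $G_1\times G_2$ if and only if each $H_i$ is coamenable in $G_i$ (the relevant Schreier graph is the Cartesian product of the two Schreier graphs, and amenability passes to and from such products).

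The base case $n=1$ is the coamenability of $\Stf(d_1)$. Since $F$ acts transitively on $D$ by Corollary~\ref{trans}(a), $\Stf(d_1)$ is conjugate to $\Stf(\frac12)$, whose Schreier graph is the amenable graph described in Proposition~\ref{shreier}; by (i) this settles $n=1$. For the inductive step fix $S=\{d_1,\dots,d_n\}$ with $n\ge 2$. First I would reposition the points so that they straddle $\frac12$: the action of $F$ is transitive on ordered tuples of dyadic rationals (this follows from Corollary~\ref{trans} together with conjugation, since $\Stf(a)$ acts transitively on the dyadic rationals of $(0,a)$ and of $(a,1)$ for every dyadic $a$), so there is $g\in F$ carrying $S$ to a set $S'=\{e_1,\dots,e_n\}$ with $e_1<\frac12<e_2<\dots<e_n$. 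By (i) it suffices to treat $\Stf(S')$.

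Now I enlarge $S'$ by the midpoint and apply $\psi$. By (ii) it is enough to show that $\Stf(\frac12,e_1,\dots,e_n)\le\Stf(S')$ is coamenable. Under the isomorphism $\psi:\Stf(\frac12)\to F\times F$, an element fixing $\frac12$ fixes a dyadic $d<\frac12$ exactly when its first coordinate fixes $2d$, and fixes a dyadic $d>\frac12$ exactly when its second coordinate fixes $2d-1$; both $2d$ and $2d-1$ are again dyadic in $(0,1)$. Hence
\[
\psi\bigl(\Stf(\tfrac12,e_1,\dots,e_n)\bigr)=\Stf(2e_1)\times\Stf(2e_2-1,\dots,2e_n-1),
\]
a product whose first factor stabilizes one point and whose second stabilizes $n-1$ points. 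By the induction hypothesis both factors are coamenable in $F$, so by (iv) the product is coamenable in $F\times F$, i.e.\ $\Stf(\frac12,e_1,\dots,e_n)$ is coamenable in $\Stf(\frac12)$. Since $\Stf(\frac12)$ is coamenable in $F$ by the base case, property (iii) gives coamenability of $\Stf(\frac12,e_1,\dots,e_n)$ in $F$, and then (ii) and (i) propagate this back to $\Stf(S')$ and finally to $\Stf(S)$.

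The main obstacle is the bookkeeping of the induction rather than any single hard estimate: the reduction via $\psi$ lowers the point count on \emph{both} sides only when at least one point lies strictly below and one strictly above $\frac12$, which is exactly why the preliminary repositioning — and hence the transitivity of $F$ on ordered dyadic tuples — is indispensable; a configuration sitting entirely on one side of $\frac12$ would reproduce a stabilizer of $n$ points and the induction would fail to descend. The remaining care goes into properties (ii)–(iv): I would either cite the standard mean-theoretic arguments or derive them directly from the graph definition of amenability, using that the Schreier graph of $G_1\times G_2$ modulo $H_1\times H_2$ is the Cartesian product of the two Schreier graphs and that graph amenability behaves well under the corresponding products and quotient projections.
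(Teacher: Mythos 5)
Your argument is correct in outline but takes a genuinely different route from the paper. The paper's proof is a short, self-contained F\o lner computation: it identifies cosets of $\Stf(d_1,\ldots,d_n)$ with increasing $n$-tuples of dyadic rationals, takes the tuples $E_i=\left(\frac1{2^{i+n}},\ldots,\frac1{2^{i+1}}\right)$, observes that $x_1$ fixes each $E_i$ while $x_0$ sends $E_i$ to $E_{i+1}$, so the sets $\{E_1,\ldots,E_m\}$ have boundary of size $2$ and the Cheeger constant vanishes. Your induction via the decomposition $\psi\bigl(\Stf(\frac12,e_1,\ldots,e_n)\bigr)=\Stf(2e_1)\times\Stf(2e_2-1,\ldots,2e_n-1)$ is sound, and you are right both that the naive intersection argument fails and that the repositioning of the points across $\frac12$ is what makes the induction descend; the permanence properties (i)--(iv) you invoke are all true (and you correctly steer clear of the one that famously fails, passage of coamenability to intermediate subgroups). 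What your approach buys is structural insight: it reduces everything to the single-point case plus soft permanence properties of coamenable subgroups. What it costs is that (ii)--(iv) --- especially transitivity in towers and the product statement --- require the mean-theoretic formulation or a careful F\o lner-combination argument, none of which is in the paper, whereas the paper's explicit F\o lner sets settle all $n$ at once. Two points to tighten if you pursue your route: first, your base case cites Proposition~\ref{shreier} for amenability of $\Gamma(F,\{x_0,x_1\},\Stf(\frac12))$, but that proposition only describes the graph; you still must exhibit F\o lner sets there (segments of the ray of vertices $\frac1{2^k}$, on which $x_1$ acts trivially --- precisely the $n=1$ instance of the paper's computation), so the explicit estimate is not actually avoided. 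Second, the transitivity of $F$ on ordered tuples of dyadic rationals used for the repositioning is the same fact the paper cites to parametrize the cosets; your sketch of deriving it from Corollary~\ref{trans} by conjugation is fine but should be written out.
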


\begin{proof}
First, we describe the structure of the Schreier graph
$\Gamma(F,\{x_0,x_1\},\Stf(d_1,\ldots,d_n))$, $d_1<d_2<\cdots<d_n$.
Analogously to the singleton case there is a one-to-one
correspondence between cosets from $F/\Stf(d_1,\ldots,d_n)$ and all
strictly increasing $n$-tuples of dyadic rationals. This follows
from the fact that $F$ acts transitively on the latter set
(see~\cite{intro_tomp}). There is an edge labelled by
$s\in\{x_0,x_1\}$ from the coset $(d_1^\prime,\ldots,d_n^\prime)$ to
the coset $(d_1^{\prime\prime},\ldots,d_n^{\prime\prime})$ if and
only if $s(d_i^\prime)=d_i^{\prime\prime}$ for every $i$.

Geometrically one can interpret this in the following way. Consider
a disjoint union of $n$ copies of
$\Gamma(F,\{x_0,x_1\},\Stf(\frac12))$ (a layer for each $d_i$). Then
the coset $(d_1^\prime,\ldots,d_n^\prime)$ of
$F/\Stf(d_1,\ldots,d_n)$ can be represented by the path joining
$d_i^\prime$ vertex on the $i$-th layer with $d_{i+1}^\prime$ vertex
on the $(i+1)$-th layer (see Figure~\ref{fig_layers}). The action of
the generators on the set of such paths is induced by the
independent actions of the generators on the layers.

\begin{figure}[h]
\begin{center}
\epsfig{file=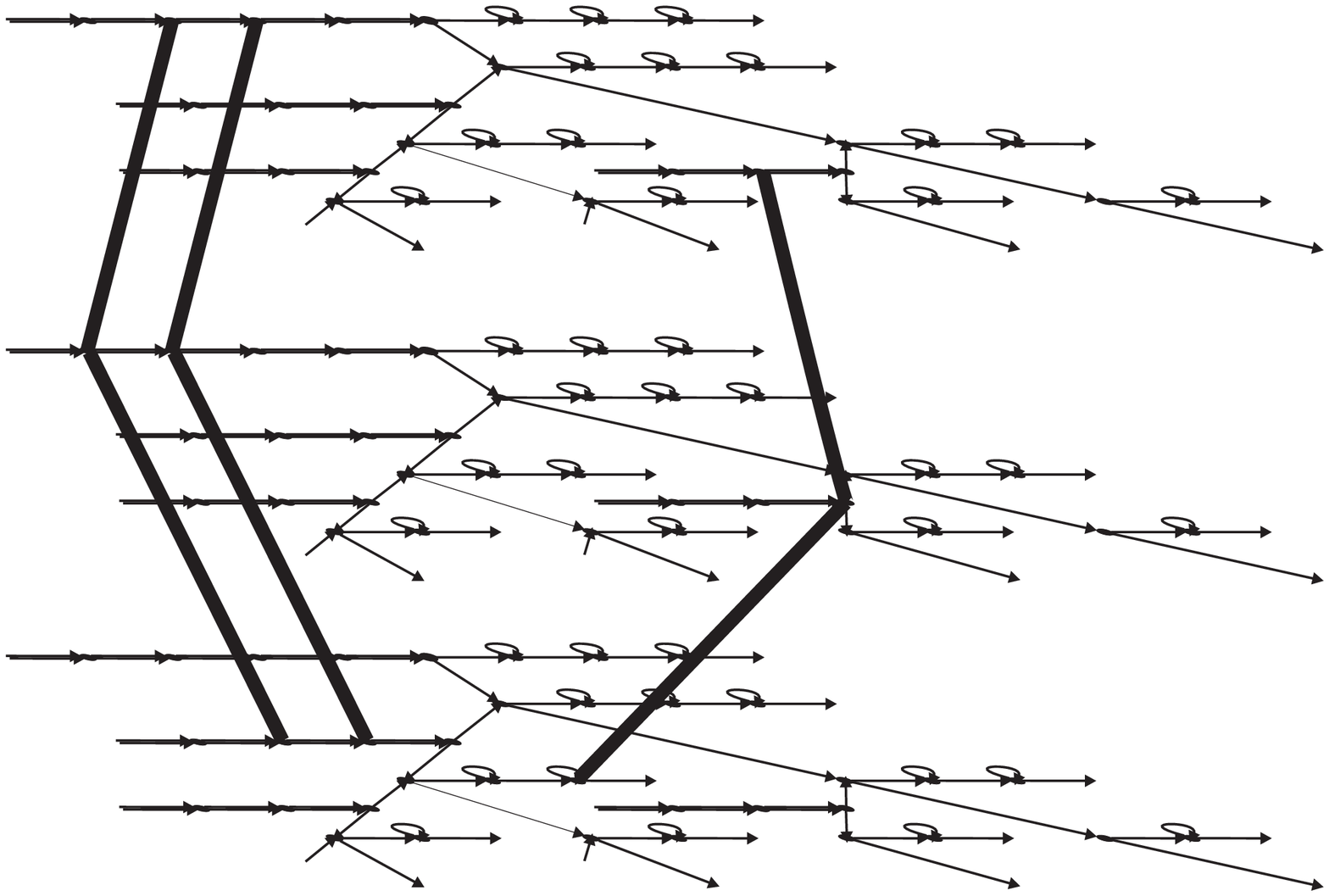,height=180pt}
\caption{Cosets in $F/\Stf(d_1,d_2,d_3)$\label{fig_layers}}
\end{center}
\end{figure}

Now define
$$E_i=\left(\frac1{2^{i+n}},\frac1{2^{i+n-1}},\ldots,\frac1{2^{i+1}}\right)\in F/\Stf(d_1,\ldots,d_n)$$
and
$$S_m=\left\{E_i\ \bigl|\ 1\leq i\leq m\right\}.$$

Since $x_1(E_i)=E_i$ and $x_0(E_i)=E_{i+1}$ we have that the
boundary $\partial S_m=\{E_0,E_{m+1}\}$ and
$$\lim_{m\to\infty}\frac{|\partial S_m|}{|S_m|}=\lim_{m\to\infty}\frac2m=0.$$

Thus $h(\Gamma(F,\{x_0,x_1\},\Stf(d_1,\ldots,d_n)))=0$ and
$\Stf(d_1,\ldots,d_n)$ is coamenable in $F$.
\end{proof}

The amenability of the action of $F$ on the set of dyadic rational
numbers and on the set of the ordered tuples of dyadic rational
numbers was also noted independently by N.~Monod and Y.~Glasner
(private communication).

\section{The Schreier graph of the action of $F$ on $L_2([0,1])$}
\label{sec_action_of_L2}
\noindent There is a natural unitary representation of Thompson's
group $F$ in the space $\mathcal B(L_2([0,1]))$ of all bounded
linear operators on $L_2([0,1])$. For $g\in F$ and $f\in L_2([0,1])$
define
$$(\pi_gf)(x)=\sqrt\frac{dg(x)}{dx}f(g^{-1}x).$$

For our purposes it is convenient to consider this action with
respect to the orthonormal Haar wavelet basis
$B=\{h^{(0)},h^{(i)}_j, i\geq0, j=1\ldots 2^i\}$ in $L_2([0,1])$,
where $h^{(0)}(x)\equiv1$ and

\begin{center}
\begin{tabular}{ll}
$h^{(0)}(x)\equiv1$, & $h^{(0)}_1(x)=\left\{
\begin{array}{l}
-1, x<\frac12,\\
1, x\geq\frac12, \\
\end{array}
\right.$
\end{tabular}
\end{center}

$$h^{(i)}_j(x)=\left\{
\begin{array}{l}
-2^{\frac i2}, \frac{j-1}{2^i}\leq x<\frac{j-1}{2^i}+\frac1{2^{i+1}},\\
2^{\frac i2}, \frac{j-1}{2^i}+\frac1{2^{i+1}}\leq x\leq\frac{j}{2^i}, \\
0, x\notin [\frac{j-1}{2^i},\frac{j}{2^i}].
\end{array}
\right.$$

This basis has first appeared in $1910$ in the paper of
Haar~\cite{haar:basis} and plays an important role in the wavelet
theory (see, for
example,~\cite{daubechies:wavelets,walter_s:wavelets}).

The convenience of using this basis for us comes from the following
fact. Each of the generators $x_0$ and $x_1$ acts on each of the
basis functions $h^{(i)}_j$ for $i\geq3$ linearly on the support of
$h^{(i)}_j$, so that the image also belongs to $B$. More precisely,
straightforward computations yield\vspace{.3cm}
\begin{equation}
\label{eqn_actionL2}
\begin{array}{l}
\vspace{.2cm} \pi_{x_0}h^{(i)}_j=h^{(i+1)}_j, \quad i\geq 1, \quad
1\leq
j\leq2^{i-1},\\

\vspace{.2cm} \pi_{x_0}h^{(i)}_j=h^{(i)}_{j-2^{i-2}},\quad  i\geq
2,\quad
2^{i-1}+1\leq j\leq2^{i-1}+2^{i-2},\\

\vspace{.5cm} \pi_{x_0}h^{(i)}_j=h^{(i-1)}_{j-2^{i-1}},\quad  i\geq
2,\quad
2^{i-1}+2^{i-2}+1\leq j\leq2^i,\\

\vspace{.2cm} \pi_{x_1}h^{(i)}_j=h^{(i)}_j, \quad i\geq 1, \quad
1\leq
j\leq2^{i-1},\\

\vspace{.2cm}
\pi_{x_1}h^{(i)}_j=h^{(i+1)}_{j+2^{i-1}},\quad  i\geq
2,\quad
2^{i-1}+1\leq j\leq2^{i-1}+2^{i-2},\\

\pi_{x_1}h^{(i)}_j=h^{(i)}_{j-2^{i-3}},\quad i\geq
3,\\
\vspace{.2cm}
\hspace{3cm} 2^{i-1}+2^{i-2}+1\leq j\leq2^{i-1}+2^{i-2}+2^{i-3},\\

\vspace{.2cm}
\pi_{x_1}h^{(i)}_j=h^{(i-1)}_{j-2^{i-1}},\quad  i\geq
3,\quad
2^{i-1}+2^{i-2}+2^{i-3}+1\leq j\leq2^i.\\
\end{array}
\end{equation}

There is a one-to-one correspondence $\psi$ between
$B\setminus\{h^{(0)}\}$ and the set of all dyadic rationals from the
interval $(0,1)$ given by
$\psi(h^{(i)}_j)=\frac{j-1}{2^i}+\frac1{2^{i+1}}$, that is, each
basis function corresponds to the point of its biggest jump (where
the function changes the sign).

Below we will use the following simple observation, which can also
be used to derive equalities~\eqref{eqn_actionL2}. If a function
$h(x)\in L_2([0,1])$ changes its sign at the point $x_0$ then for
any $g\in F$ the function $(\pi_gh)(x)$ changes its sign at the
point $g(x_0)$. This enables us to find the image of $h^{(i)}_j,
i\geq3$ under action of $\pi_{x_k}$, $k=0,1$ in the following easy
way:
$$\pi_{x_k}h^{(i)}_j=\psi^{-1}\bigl(x_k(\psi(h^{(i)}_j))\bigr)$$
In other words the following diagram is commutative for $k=0,1$

\[\begin{CD}
h^{(i)}_j @>{\pi_{x_k}}>> h^{(i')}_{j'}\\
@V{\psi}VV               @V{\psi}VV\\
\frac{j-1}{2^i}+\frac1{2^{i+1}} @>>x_k>
\frac{j'-1}{2^{i'}}+\frac1{2^{i'+1}}
\end{CD}
\] \vspace{.5cm}

Now we define the Schreier graph of the action of a group on a
Hilbert space.

\noindent Let $\mathcal H$ be a Hilbert space with an orthonormal
basis $\{h_i, i\geq1\}$. Suppose there is a representation $\pi$ of
a group $G=\langle S\rangle$ in the space of all bounded linear
operators $\mathcal B(\mathcal H)$. We denote the image of $g\in G$
under $\pi$ as $\pi_g$.

\begin{defin}
The \emph{Schreier graph $\Gamma$ of the action of a group $G$ on a
Hilbert space $H$ with respect to the basis $\{h_i,i\geq1\}$ of $H$
and generating set $S\subset G$} is an oriented labelled graph
defined as follows. The set of vertices of $\Gamma$ is the basis
$\{h_i,i\geq1\}$ and there is an arrow from $h_i$ to $h_j$ with
label $s\in S$ if and only if $\langle\pi_s(h_i),h_j\rangle\neq0$
(in other words the coefficient of $\pi_s(h_i)$ at $h_j$ in the
basis $\{h_i,i\geq1\}$ is nonzero).
\end{defin}

The argument above shows that the Schreier graph of the Thompson's
group action on $L_2([0,1])$ with respect to the Haar basis and
generating set $\{x_0,x_1\}$ coincides modulo a finite part with the
Schreier graph $\Gamma(F,\{x_0,x_1\},D)$. In order to complete the
picture we have to find the images under the action of $\pi_{x_0}$
and $\pi_{x_1}$ of those $h^{(i)}_j$ which are not listed
in~\eqref{eqn_actionL2}.

Again straightforward computations give the following equalities.
\vspace{.3cm}

$\pi_{x_0}h^{(0)}=(\frac14+\frac{\sqrt2}2)h^{(0)} - \frac14h^{(0)}_1 +
(-\frac12+\frac{\sqrt2}4)h^{(1)}_1,$

$\pi_{x_0}h^{(0)}_1=\frac14h^{(0)} + (-\frac14+\frac{\sqrt2}2)h^{(0)}_1 +
(\frac12+\frac{\sqrt2}4)h^{(1)}_1,$

$\pi_{x_0}h^{(1)}_2=(\frac12-\frac{\sqrt2}4)h^{(0)} + (\frac12+\frac{\sqrt2}4)h^{(0)}_1 -
\frac12h^{(1)}_1,$ \vspace{.3cm}

$\pi_{x_1}h^{(0)}=(\frac58+\frac{\sqrt2}4)h^{(0)} + (-\frac38+\frac{\sqrt2}4)h^{(0)}_1
-\frac{\sqrt2}8h^{(1)}_2+(\frac14-\frac{\sqrt2}4)h^{(2)}_3,$

$\pi_{x_1}h^{(0)}_1=(-\frac38+\frac{\sqrt2}4)h^{(0)} + (\frac58+\frac{\sqrt2}4)h^{(0)}_1
-\frac{\sqrt2}8h^{(1)}_2+(\frac14-\frac{\sqrt2}4)h^{(2)}_3,$

$\pi_{x_1}h^{(1)}_1=\frac{\sqrt2}8h^{(0)} + \frac{\sqrt2}8h^{(0)}_1
+(-\frac14+\frac{\sqrt2}2)h^{(1)}_2+(\frac12+\frac{\sqrt2}4)h^{(2)}_3,$

$\pi_{x_1}h^{(2)}_4=(-\frac14+\frac{\sqrt2}4)h^{(0)} + (-\frac14+\frac{\sqrt2}4)h^{(0)}_1
+(\frac12+\frac{\sqrt2}4)h^{(1)}_2-\frac12h^{(2)}_3.$

These computations together with Proposition~\ref{shreier} prove the following proposition.

\begin{prop}
The Schreier graph of Thompson's group action on $L_2([0,1])$ with
respect to the Haar basis and the generating set $\{x_0,x_1\}$ has
the following structure (dashed arrows are labelled by $x_0$ and
solid arrows by $x_1$)
\begin{center}
\epsfig{file=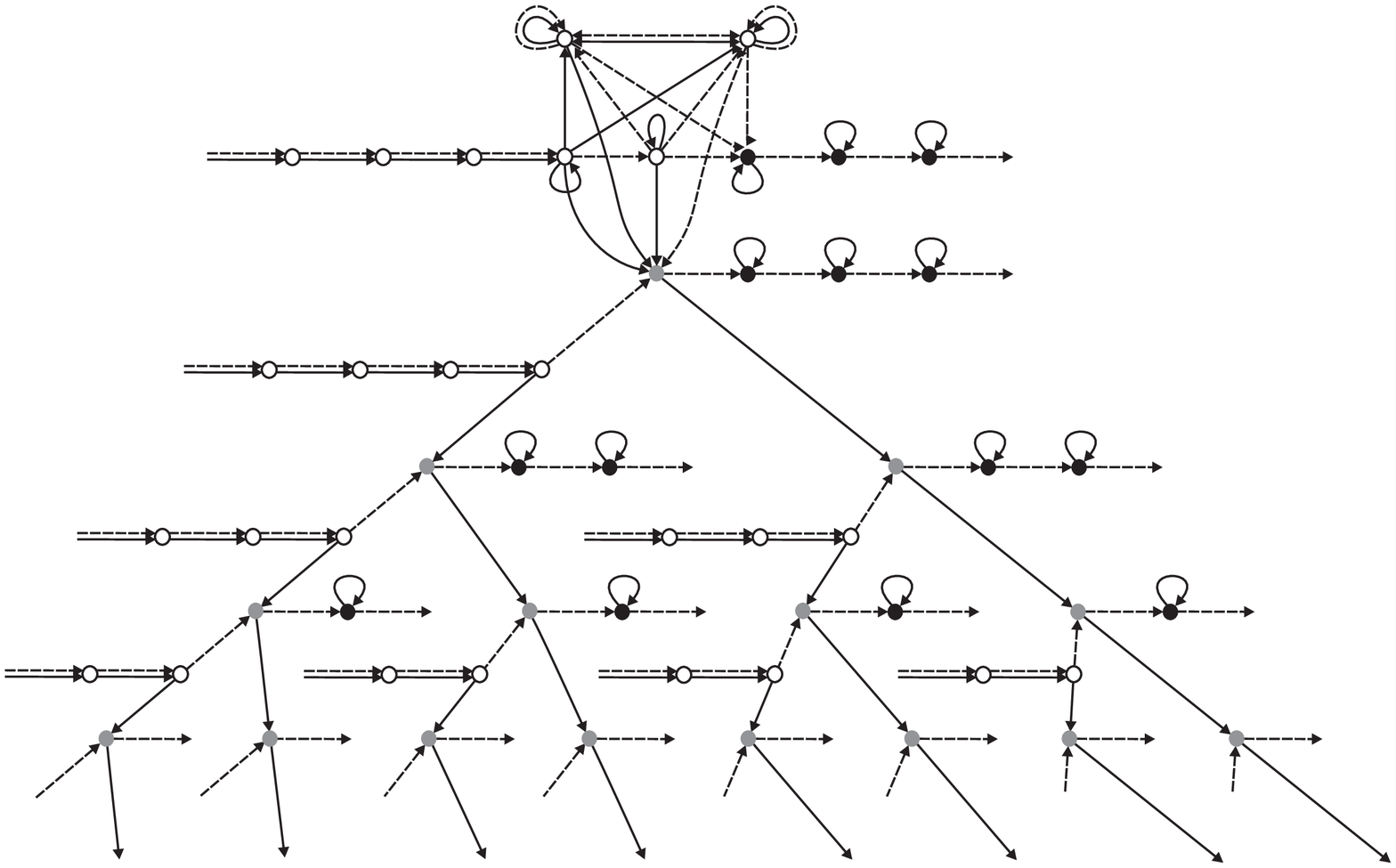,height=170pt}
\end{center}
\end{prop}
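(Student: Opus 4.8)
The plan is to present the graph as the union of an infinite ``generic'' part, on which each generator permutes individual basis vectors, and a finite ``exceptional'' part, on which the generators genuinely mix several basis vectors; the generic part is then read off from Proposition~\ref{shreier}, and the exceptional part from the explicit formulas displayed just before the statement. To separate the two, observe that $x_0$ and $x_1$ are piecewise linear with interior breakpoints only among $\tfrac12,\tfrac34,\tfrac78$, whereas the support $[\tfrac{j-1}{2^i},\tfrac{j}{2^i}]$ of $h^{(i)}_j$ is a dyadic interval; hence for all but finitely many pairs $(i,j)$, all having $i\le 2$, each generator is affine on the support of $h^{(i)}_j$.

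For such generic $h^{(i)}_j$ the sign-change observation and the commutative diagram preceding the statement give $\pi_{x_k}h^{(i)}_j=\psi^{-1}(x_k(\psi(h^{(i)}_j)))$, a single basis vector; this is exactly the content of \eqref{eqn_actionL2}. Thus each generic vertex carries one outgoing $x_k$-arrow, and $\psi$ maps the induced subgraph on the generic vertices isomorphically onto the subgraph of $\Gamma(F,\{x_0,x_1\},D)$ spanned by the corresponding dyadic rationals, which by Proposition~\ref{shreier} is the binary tree with its attached rays minus finitely many central vertices. For each of the finitely many exceptional vertices, the images under $\pi_{x_0}$ and $\pi_{x_1}$ are the explicit finite linear combinations displayed before the statement, and by definition of the Schreier graph every nonzero coefficient yields one outgoing labelled arrow; reading these off produces all arrows issuing from the exceptional part.

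The step I expect to be the main obstacle is the correct gluing of the two parts, that is, the bookkeeping of every arrow meeting the finite exceptional set. Since each $\pi_{x_k}$ is unitary but not a permutation of the basis, an arrow may enter an exceptional vertex from a generic one, and such incoming arrows are invisible in the explicit outgoing formulas; they must be recovered either from those instances of \eqref{eqn_actionL2} whose single-vector target happens to be exceptional, or, equivalently, through $\langle\pi_{x_k}h_i,h_j\rangle=\langle h_i,\pi_{x_k^{-1}}h_j\rangle$ by computing the finitely many images under $\pi_{x_0^{-1}}$ and $\pi_{x_1^{-1}}$. Once every boundary arrow is matched on both sides, the generic subgraph supplied by Proposition~\ref{shreier} and the exceptional vertices with their explicitly computed arrows assemble into precisely the displayed graph; this matching is the only delicate point, the rest being the routine verification of the coefficient formulas.
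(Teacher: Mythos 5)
Your proposal follows essentially the same route as the paper: the generic basis vectors (those on whose supports the generators are affine) are handled through the correspondence $\psi$ and Proposition~\ref{shreier}, while the finitely many exceptional vectors are treated by the explicit expansions displayed before the statement. Your additional care about arrows entering the exceptional set is sound and is implicitly covered in the paper by those instances of~\eqref{eqn_actionL2} whose single-vector targets happen to be exceptional.
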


\section{Parts of the Cayley graph of $F$}
\label{sec_cayley}

Recall, that the positive monoid $P$ of $F$ is the monoid generated
by all generators $x_i$, $i\geq 0$. As a monoid it has a
presentation
\[P\cong\langle x_0,x_1,x_2,\ldots\ |\ x_kx_n=x_{n+1}x_k,\ 0\leq
k<n\rangle,\] which coincides with the infinite
presentation~\eqref{eqn_presF} of $F$. The group $F$ itself can be
defined as a group of left fractions of $P$ (i.e. $F=P^{-1}\cdot
P$).

It was shown in~\cite{grigorch_s:amenab} (see
also~\cite{grigorch:growth_amenab}) that the amenability of $F$ is
equivalent to the right amenability (with respect to our
convention~\eqref{eqn_conv}) of $P$. Moreover, let $\Gamma_F$ be the
Cayley graph of $F$ with respect to the generating set $\{x_0,x_1\}$
and $\Gamma_P$ be the induced subgraph of $\Gamma_F$ containing
positive monoid $P$. The following proposition is of a folklore
type.

\begin{prop}
Amenability of $F$ is equivalent to amenability of the graph
$\Gamma_P$.
\end{prop}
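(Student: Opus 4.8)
The plan is to establish amenability of $\Gamma_P$ as an equivalent reformulation of the known result of Grigorchuk and Stepin, namely that amenability of $F$ is equivalent to the right amenability of the positive monoid $P$. The strategy rests on the observation that $\Gamma_P$ is precisely the right Cayley graph of $P$ with respect to the generating set $\{x_0,x_1\}$: since $P$ embeds into $F$ and $F=P^{-1}\cdot P$, every element of $P$ is a genuine vertex of $\Gamma_F$, and the induced subgraph on these vertices records exactly right multiplication by $x_0$ and $x_1$ inside $P$. Thus amenability of the graph $\Gamma_P$ (in the sense of vanishing Cheeger constant from the earlier definition) is literally the right amenability of $P$ expressed combinatorially via a Følner-type condition on finite subsets of $P$.

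First I would make the identification of $\Gamma_P$ with the right Cayley graph of $P$ explicit. Because $P$ is cancellative (it embeds in the group $F$), right multiplication by each generator $x_i$ is injective, so $\Gamma_P$ has bounded out-degree; I would note that right multiplication by $x_0$ and $x_1$ also has uniformly bounded in-degree on $P$, so $\Gamma_P$ is a graph of bounded degree and the Cheeger-constant definition applies. Next I would recall, citing \cite{grigorch_s:amenab}, that $F$ is amenable if and only if $P$ is right amenable, where right amenability of a cancellative monoid means the existence of a right-invariant Følner sequence, i.e. finite sets $S_m\subset P$ with $|S_m x_i \,\triangle\, S_m|/|S_m|\to 0$ for each generator. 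The remaining step is to translate this Følner condition into the vanishing of $h(\Gamma_P)$: a sequence of finite vertex sets $S_m$ in $\Gamma_P$ has $|\partial S_m|/|S_m|\to0$ precisely when the sets are asymptotically invariant under the edge maps, which are exactly right multiplication by the generators. I would state this equivalence of Følner and Cheeger conditions for bounded-degree Cayley-type graphs and invoke the Gromov doubling condition (Theorem~\ref{thm_gromov}) to handle independence of the generating set where needed.

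The main obstacle is the subtlety that $\Gamma_P$ is an \emph{induced} subgraph of $\Gamma_F$ rather than the abstract right Cayley graph of $P$ taken in isolation: a priori the induced subgraph could contain extra edges coming from relations in $F$ that connect two positive elements through a generator, even when that multiplication does not stay literally within the monoid word structure. The crux will therefore be to verify that for $p,q\in P$ one has an edge $p\to q$ in $\Gamma_F$ labelled $x_i$ if and only if $q=px_i$ holds already in $P$, so that no spurious edges appear and $\Gamma_P$ coincides with the right Cayley graph of $P$. This follows from the fact that $P$ is the set of positive elements and that $px_i\in F$ lands in $P$ again, but writing this down carefully—ruling out that some $px_i^{-1}$ accidentally lies in $P$ and thereby adds an incoming edge that the monoid structure does not see—is where the real work lies. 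Once that identification is secured, the equivalence of the Cheeger and Følner formulations is routine, and the proposition reduces to \cite{grigorch_s:amenab}.
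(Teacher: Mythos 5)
Your proposal is correct in outline but takes a genuinely different route from the paper. You reduce the statement to the Grigorchuk--Stepin theorem of \cite{grigorch_s:amenab} ($F$ amenable iff $P$ right amenable) plus two translations: identifying $\Gamma_P$ with the right Cayley graph of $P$, and converting right amenability of the cancellative monoid $P$ into a F{\o}lner condition and then into $h(\Gamma_P)=0$. The paper never invokes \cite{grigorch_s:amenab} for this proposition; it argues directly: for one direction it shifts an arbitrary finite $T\subset F$ into $P$ by a right translation, which can only decrease the Cheeger quotient, and for the other it bounds $|\partial_F T|\leq 4|\partial_P T|$ for $T\subset P$ via $|Tx_i^{-1}\Delta T|=|Tx_i\Delta T|=2|Tx_i\setminus T|$ and $Tx_i\setminus T\subset\partial_P T$. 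Two caveats on your version. First, the step you single out as the crux --- possible spurious edges in the induced subgraph --- is a non-issue: an $x_i$-labelled edge of $\Gamma_F$ joining $p,q\in P$ means exactly $q=px_i$, which is a relation inside the submonoid $P$ whether read as an outgoing edge at $p$ or an incoming edge at $q$, so $\Gamma_P$ is tautologically the right Cayley graph of $P$. Second, the step you dismiss as routine is where the real content sits: the equivalence of right amenability with a F{\o}lner condition for a cancellative monoid is a theorem (of Namioka/Argabright--Wilde type), not a definition, and upgrading the one-sided quantity $|Tx_i\setminus T|$ (which is what $\partial_P T$ controls) to the two-sided $|Tx_i\Delta T|$ requires the injectivity of right multiplication --- precisely the computation the paper writes out. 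Your route buys conceptual clarity, exhibiting the proposition as the F{\o}lner/Cheeger face of the semigroup result; the paper's route buys a short, self-contained proof using no semigroup amenability theory.
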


\begin{proof}
Any finite set $T$ in $F$ can be shifted to the positive monoid $P$,
i.e. there is some $g\in F$ such that $Tg\subset P$. The boundary
$\partial_P(Tg)$ of this shifted set in $\Gamma_P$ is not bigger
than the boundary of $T$ in $\Gamma_F$. Hence, Cheeger constant of
$\Gamma_P$ is not bigger than the one of $\Gamma_F$. Thus,
non-amenability of $\Gamma_P$ implies non-amenability of $F$.

Suppose that $\Gamma_P$ is amenable. Then for any $\varepsilon>0$
there exists a subset $T$ of $P$, such that its boundary
$\partial_PT$ in $\Gamma_P$ satisfies
\begin{equation}
\label{eqn_cheeger}
\frac{|\partial_PT|}{|T|}<\frac{\varepsilon}4
\end{equation}
Now we can bound the size of the boundary $\partial_FT$ of $T$ in
$\Gamma_F$. We use simple observations that for finite sets $A$ and
$B$ of the same cardinality $|A\setminus B|=|B\setminus
A|=\frac12|A\Delta B|$ and that $|Tx_i^{-1}\Delta
T|=|(Tx_i^{-1}\Delta T)x_i|=|Tx_i\Delta T|$.

We have
\[\partial_FT=(Tx_0\setminus T)\cup(Tx_1\setminus T)\cup(Tx_0^{-1}\setminus T)\cup(Tx_1^{-1}\setminus
T).\]
Therefore,
\begin{align*}
|\partial_FT|\leq |Tx_0\setminus T|+|Tx_1\setminus
T|+|Tx_0^{-1}\setminus T|+|Tx_1^{-1}\setminus T|\\
\leq\frac12(|Tx_0\Delta T|+|Tx_1\Delta
T|+|Tx_0^{-1}\Delta T|+|Tx_1^{-1}\Delta T|)\\
=|Tx_0\Delta T|+|Tx_1\Delta T|=2|Tx_0\setminus T|+2|Tx_1\setminus
T|\leq 4|\partial_PT|<\varepsilon|T|
\end{align*}
since $Tx_i\setminus T\subset\partial_PT$ for $i=1,2$ and
by~\eqref{eqn_cheeger}. This shows that $\Gamma_F$ is also amenable
in this case.
\end{proof}

In this section we explicitly construct the induced subgraph
$\Gamma_S$ of $\Gamma_F$ containing the set of vertices
\begin{equation}
\label{eqn_defn_of_S}
S=\{x_nu\ \bigl|\ n\geq0,\ u \text{ is a word over } \{x_0,x_1\}\}.
\end{equation}
We also prove that this graph is non-amenable.

Since $S$ is included in the positive monoid of $F$ and contains
elements from the infinite generating set $\{x_0,x_1,x_2,\ldots\}$,
it is natural to use the language of forest diagrams developed
in~\cite{belk:phd,sunic:tamari} (though the existence of this
representation was originally noted by
K.Brown~\cite{brown:finiteness}). First we recall the definition and
basic facts about this representation of the elements of $F$.

There is a one-to-one correspondence between the elements of the
positive monoid of $F$ and rooted binary forests. More generally,
there is a one-to-one correspondence between elements of $F$ and,
so-called, reduced forest diagrams, but for our purposes (and for
simplicity) it is enough to consider only the elements of the
positive monoid.

A \emph{binary forest} is an ordered sequence of finite rooted
binary trees (some of which may be trivial). The forest is called
\emph{bounded} if it contains only finitely many nontrivial trees.

\begin{center}
\epsfig{file=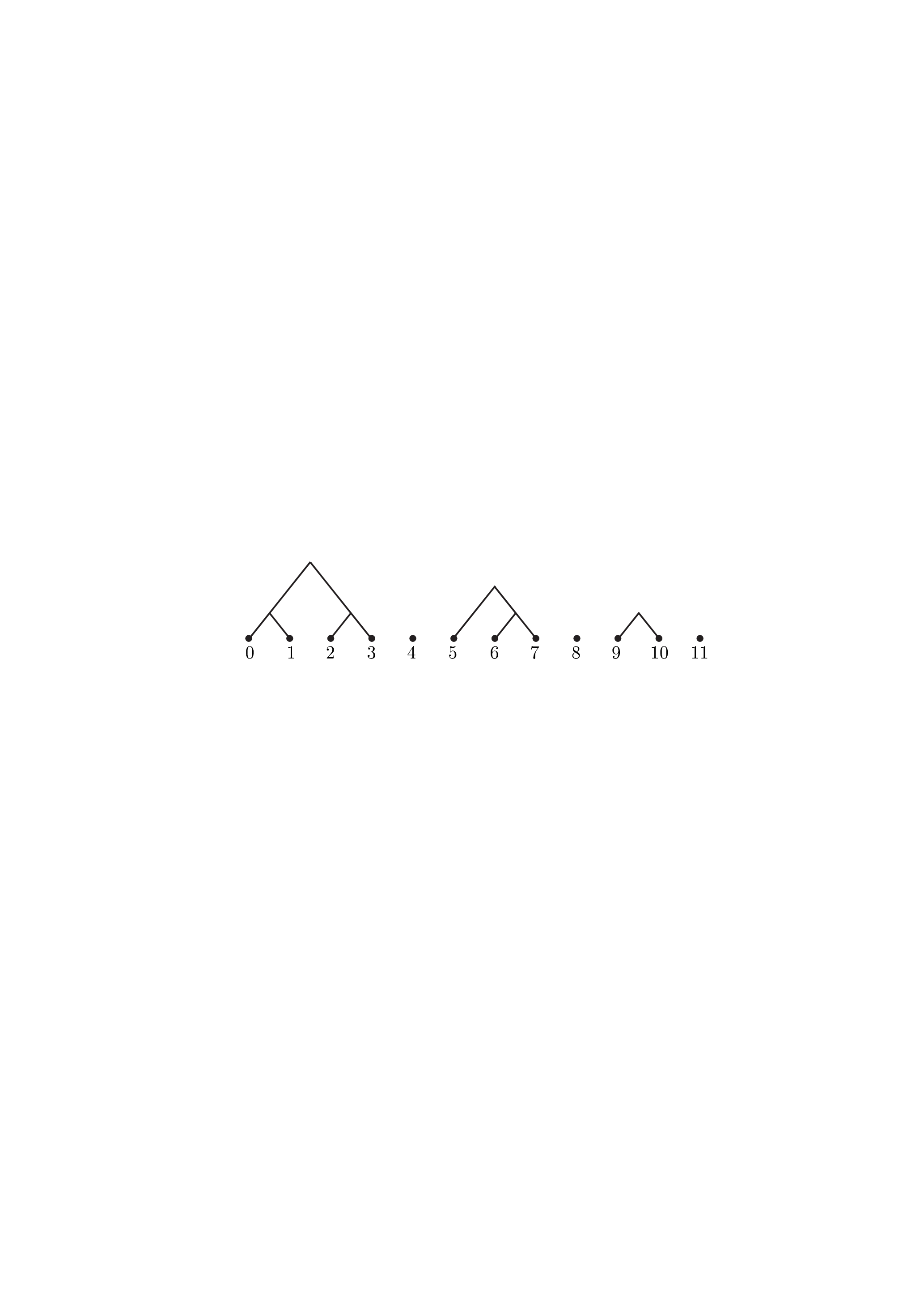}
\end{center}

There is a natural way to enumerate the leaves of the trees in the
forest from left to right. First we enumerate the leaves of the
first tree from left to right, then the leaves of the second tree,
etc. Also there is a natural left-to-right order on the set of the
roots of the trees in the forest.

The product $\mathfrak{f}\mathfrak{g}$ of two rooted binary forests
$\mathfrak{f}$ and $\mathfrak{g}$ is obtained by stacking the forest
$\mathfrak{g}$ on the top of $\mathfrak{f}$ in such a way, that the
$i$-th leaf of $\mathfrak{g}$ is attached to the $i$-th root of
$\mathfrak{f}$.

For example, if $\mathfrak{g}$ and $\mathfrak{f}$ have the following
diagrams
\begin{center}
\includegraphics{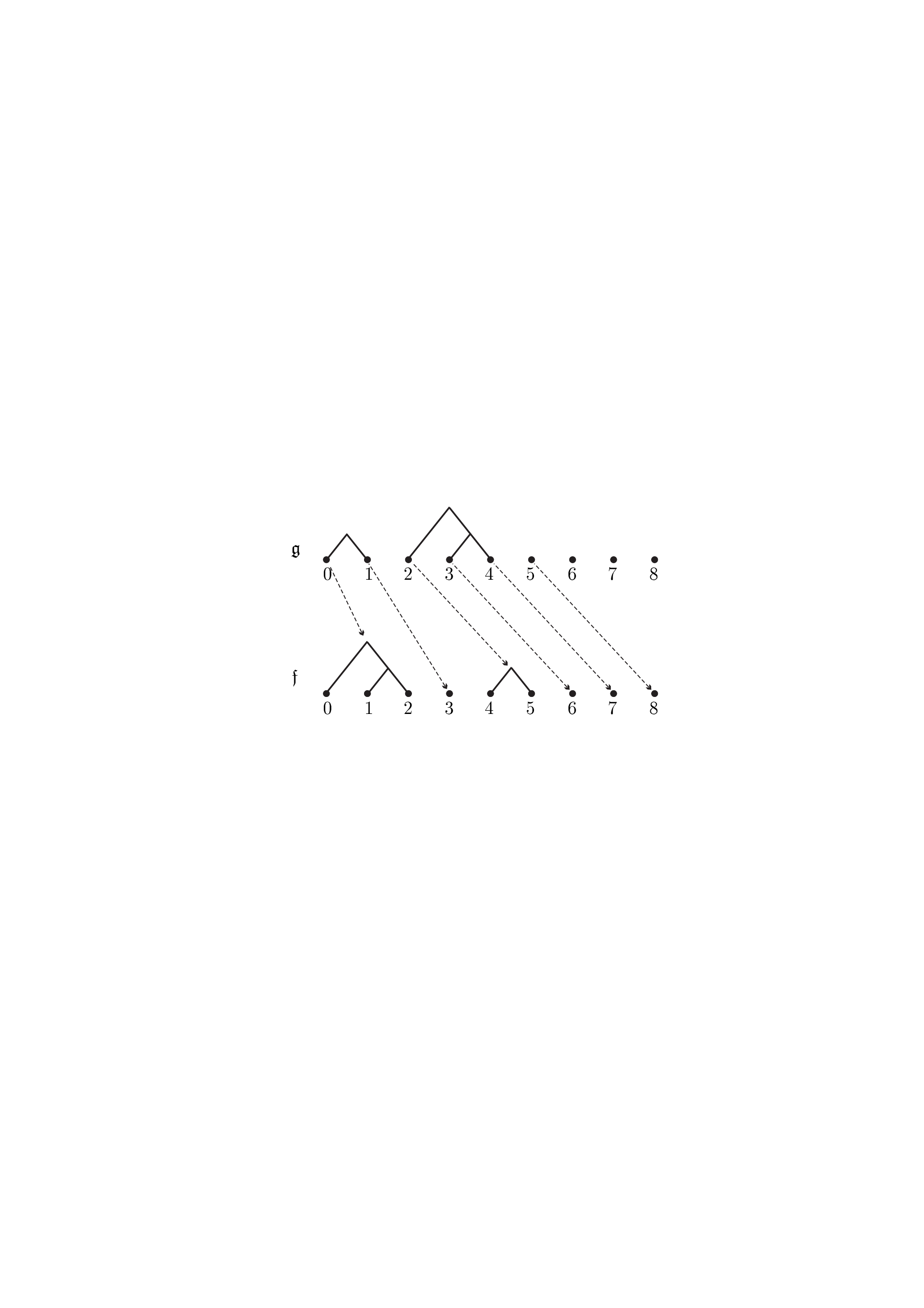}
\end{center}
then their product $\mathfrak{f}\mathfrak{g}$ is the following
rooted binary forest
\begin{center}
\includegraphics{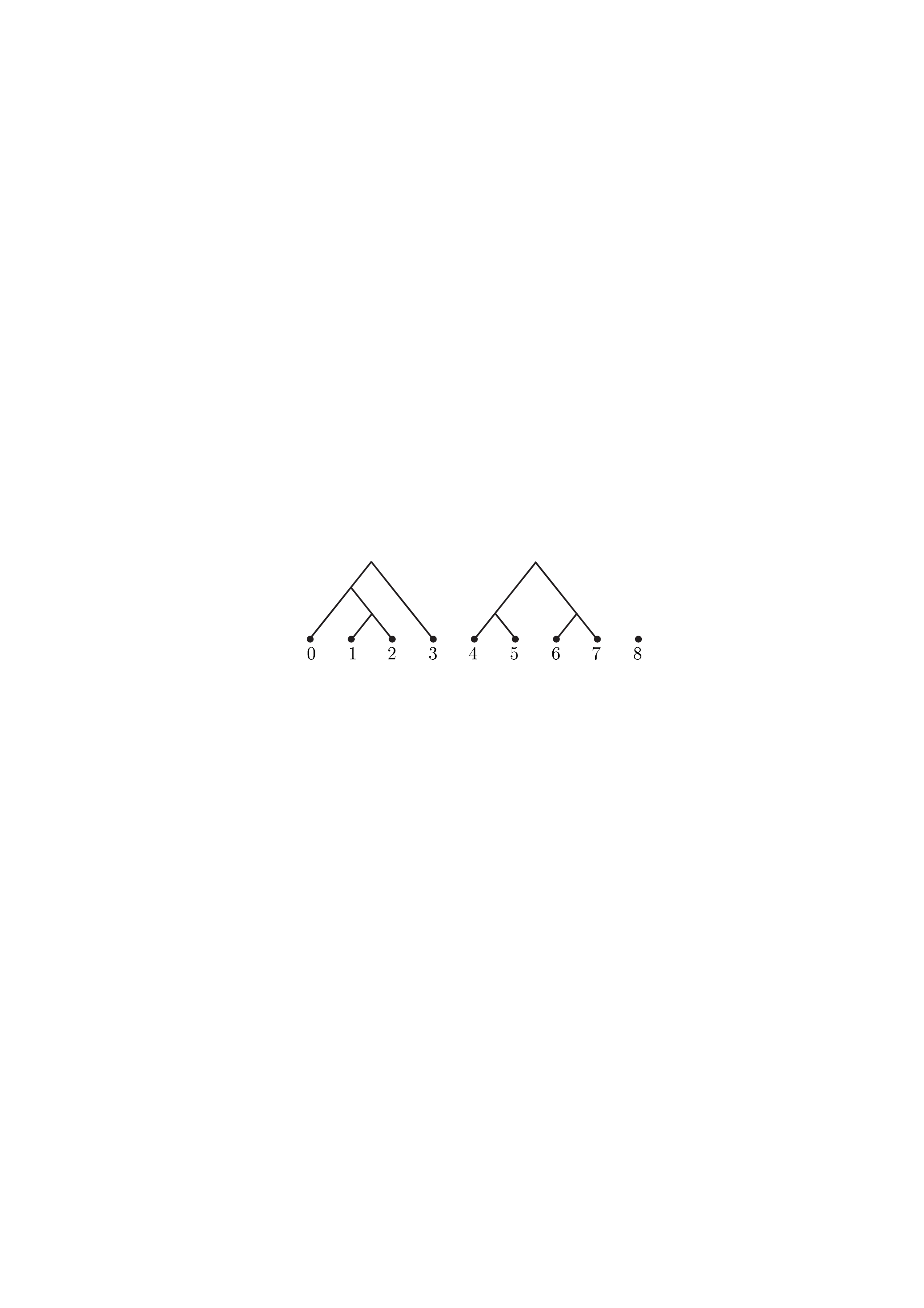}
\end{center}

With this operation the set of all rooted binary forests is
isomorphic (see~\cite{belk:phd,sunic:tamari}) to the positive monoid
of Thompson's group $F$, where $x_n$ corresponds to the forest in
which all the trees except the $(n+1)$-st one (which has number $n$)
are trivial and the $(n+1)$-st tree represents a single caret. Below
is the picture of the forest corresponding to $x_3$.

\begin{center}
\includegraphics{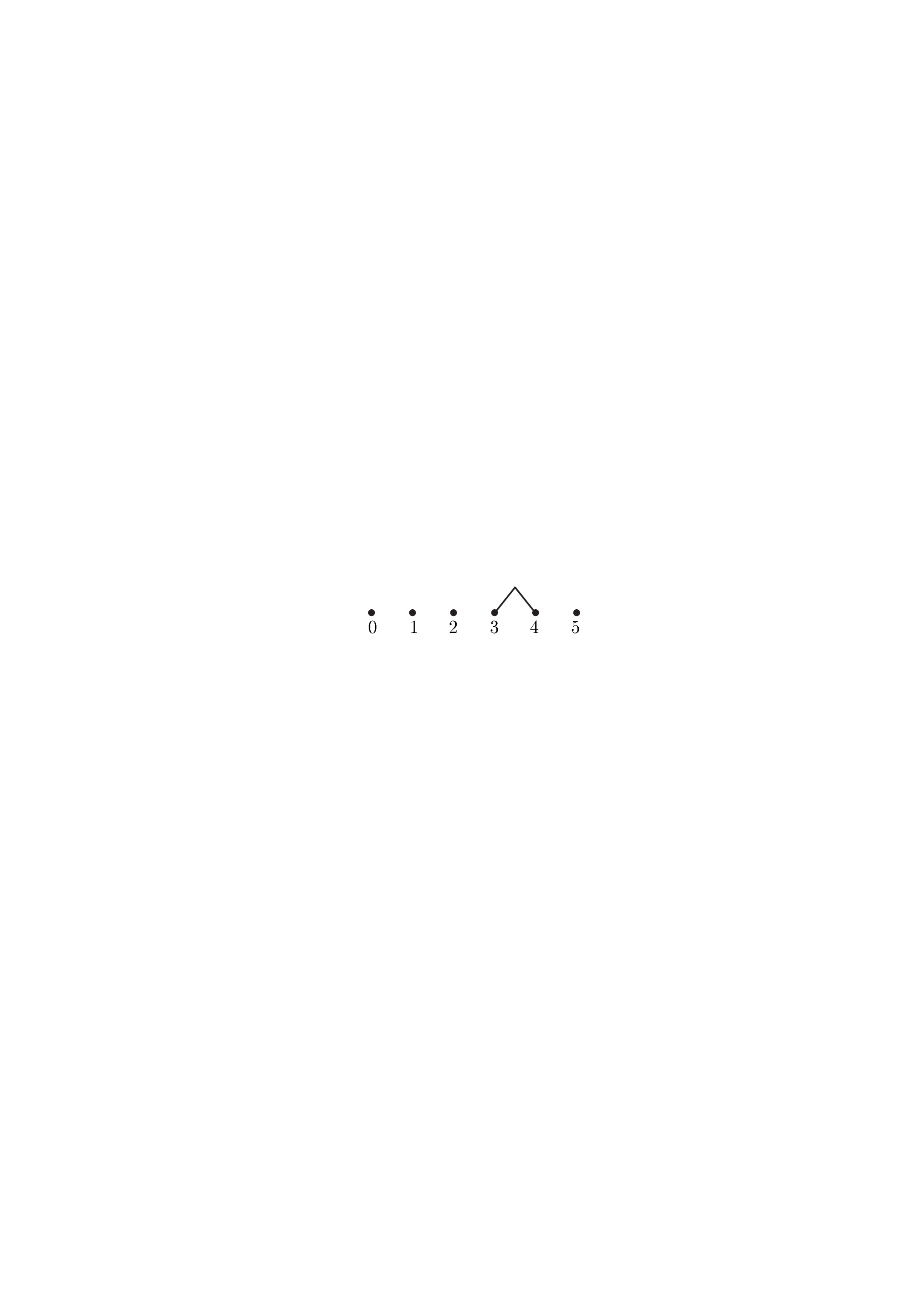}
\end{center}

The multiplication rule for the forests implies the following
algorithm for construction of the rooted forest corresponding to the
element $x_{i_1}x_{i_2}x_{i_3}\cdots x_{i_n}$ of the positive monoid
of $F$. Start from the trivial forest (where all the trees are
singletons) and consequently add the carets at the positions $i_1$,
$i_2$,\ldots, $i_n$ (counting from 0 the roots of the trees in the
forest in previous iteration).

For our main result in this section we need two lemmas.

\begin{lemma}
\label{lem_forest1} Let $u$ be a word from the positive monoid of the form
$u=x_nv$, where $n\geq 2$ and $v$ is a word over the alphabet
$\{x_0,x_1\}$ of length at most $n-2$. Then this word is not equal
in $F$ to any other word of the form $x_mw$, where $w$ is a word
over $\{x_0,x_1\}$.
\end{lemma}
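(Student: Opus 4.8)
The plan is to pass entirely to the forest model: equality in $F$ of two positive words is equality in the positive monoid $P$, hence literal equality of the associated bounded binary forests, so I will read off $n$ and $v$ from geometric features of the forest of $u=x_nv$ that are forced by $n\ge 2$ and $|v|\le n-2$. First I would record how right multiplication by $x_0,x_1$ acts on a forest all of whose non-trivial trees sit at the first two positions. Writing that data as a pair $(T_0,T_1)$ of binary trees (positions $0,1$, all further positions trivial), appending $x_0$ sends $(T_0,T_1)\mapsto(\wedge(T_0,T_1),\bullet)$ and appending $x_1$ sends $(T_0,T_1)\mapsto(T_0,\wedge(T_1,\bullet))$, where $\wedge(\cdot,\cdot)$ is a new caret over the indicated subtrees and $\bullet$ is a single leaf; in particular any word over $\{x_0,x_1\}$ yields a forest supported on positions $0,1$. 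I would also note the length invariant: the number of carets (internal vertices) of a forest equals the length of any positive word representing it, since each generator contributes exactly one caret.

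Since the conclusion asserts $w=v$ as words, I would next prove that the map sending a word over $\{x_0,x_1\}$ to its forest is injective. This follows by peeling the last letter: from the displayed action, the last letter of a non-empty word is $x_1$ exactly when $T_1$ is non-trivial (an $x_0$ leaves $T_1=\bullet$, an $x_1$ leaves $T_1$ non-trivial), and in either case the previous state is uniquely recovered, by deleting the top caret at position $1$ in the $x_1$ case, or deleting the top caret at position $0$ and resetting $T_1$ to its right subtree in the $x_0$ case. Induction on the length then gives injectivity.

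Third, I would pin down the forest of $u=x_nv$. Building $x_n$ first places a single caret at position $n$; the letters of $v$ then act only near positions $0,1$ and shift every root lying to their right one place left per letter. Tracking the distinguished $x_n$-caret, before each of the $|v|$ steps its position is at least $n-(|v|-1)\ge 3$ (this is exactly where $|v|\le n-2$ and $n\ge 2$ enter), so it is never merged into the left structure and never interferes with its construction. Hence $v$ builds precisely its own forest at positions $0,1$, and the distinguished caret survives as an isolated single caret at position $n-|v|\ge 2$. Thus the forest of $u$ is: the forest of $v$ at positions $0,1$, then trivial trees, then one isolated caret at position $n-|v|$, and this caret is the unique non-trivial tree occurring at any position $\ge 2$.

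Finally, suppose $u=x_mw$ in $F$ with $w$ over $\{x_0,x_1\}$, so the two forests coincide. The length invariant gives $|w|=|v|$. Because $w$ creates non-trivial trees only at positions $0,1$, the unique non-trivial tree at positions $\ge 2$—our isolated caret—can only be the caret produced by $x_m$; this forces $m\ge 2$, and since a caret once made a proper subtree stays one, the $x_m$-caret is never absorbed, so it descends exactly one position per letter and lands at $m-|w|$. Matching isolated carets gives $n-|v|=m-|w|$, whence $m=n$, and matching the positions-$0,1$ parts gives that $w$ and $v$ have the same forest, so $w=v$ by the injectivity step. I expect the main obstacle to be the bookkeeping in the third step: verifying cleanly that $|v|\le n-2$ keeps the distinguished caret out of the active region throughout, so that it genuinely survives as an isolated, unabsorbed single caret; the parallel claim for $x_mw$ (that the surviving caret descends one position per letter) is the same computation run in reverse.
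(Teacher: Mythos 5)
Your proof is correct and follows essentially the same route as the paper's: both pass to forest diagrams and observe that, because $|v|\le n-2$ keeps the $x_n$-caret at position at least $2$ (equivalently, with at least two trees to its left), that caret cannot be produced by an application of $x_0$ or $x_1$ and so must be the one created by the initial letter $x_m$, forcing $m=n$ and then $w=v$ by freeness of the submonoid generated by $x_0$ and $x_1$. The only differences are cosmetic: you track root positions and caret counts where the paper reads off the leaf numbers joined by the caret, and you prove the freeness of the monoid generated by $x_0$ and $x_1$ rather than citing it.
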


\begin{proof}
The forest diagram corresponding to $u$ has a caret $c$ connecting
the $n$-th and $(n+1)$-st leaves corresponding to $x_n$ and possibly
some nontrivial trees to the left of $c$.

\begin{figure}[h]
\begin{center}
\includegraphics{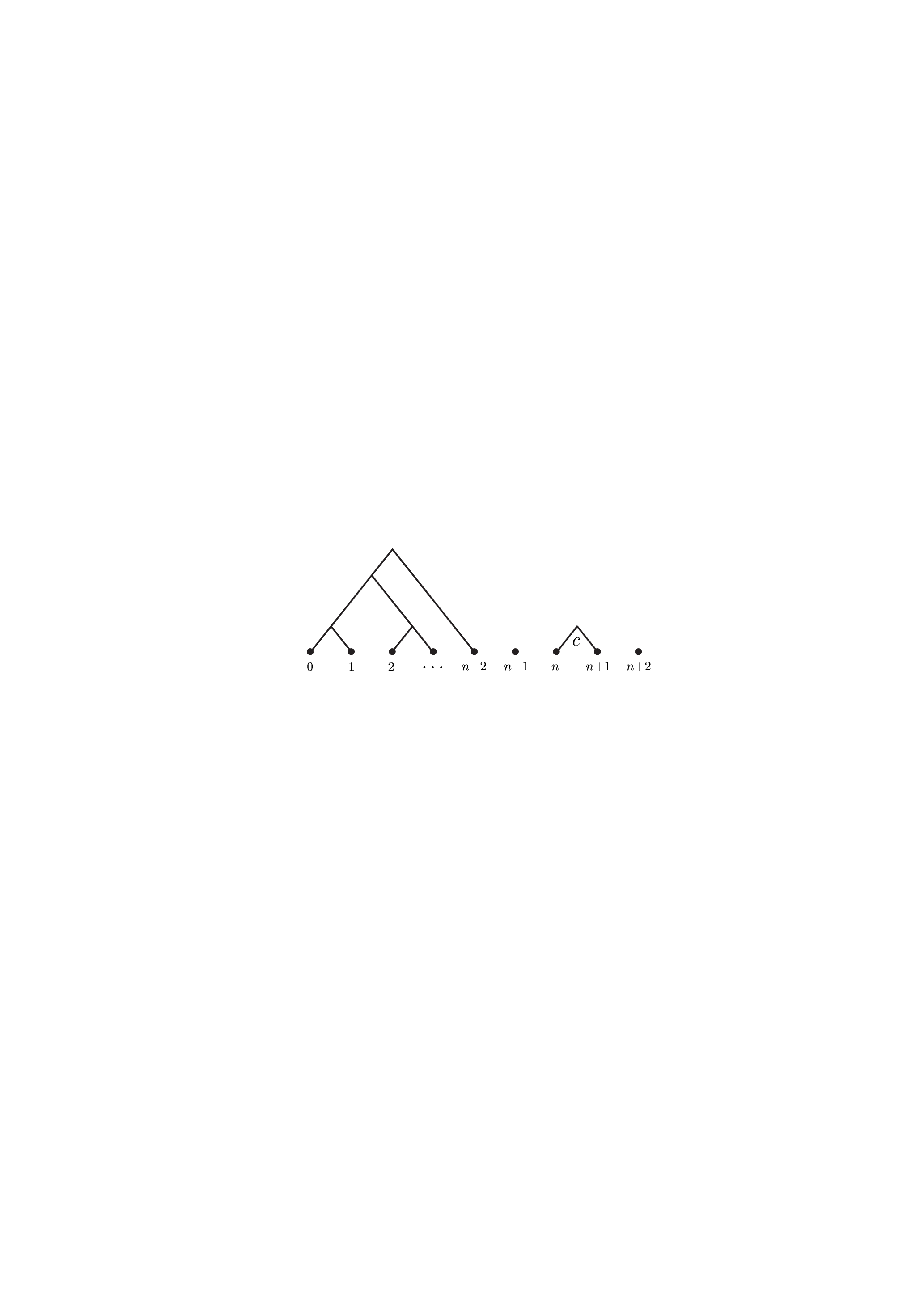}
\end{center}
\caption{Forest corresponding to $x_nv$\label{fig_forest2_1}}
\end{figure}

Indeed, after attaching the caret corresponding to $x_n$ all the
other carets are attached at positions either $0$ or $1$. Each of
these carets decreases the number of trees to the left of caret $c$
by 1. Since originally there were $n$ trees to the left from $c$ and
the length of $v$ is at most $n-2$, there must be at least $2$ trees
to the left of $c$ in the forest representing $u$.

Suppose there is another word of the form $x_mw$ in the positive
monoid of $F$ whose corresponding rooted forest coincides with the
forest of $u$. Since there are at least $2$ trees to the left of
caret $c$ one can not obtain this caret by applying $x_0$ or $x_1$.
Therefore it was constructed at the first step with application of
$x_m$. Thus $x_m=x_n$ because this caret connects the $n$-th and
$(n+1)$-st leaves, which, in turn, implies that $v=w$ in $F$. But
both $v$ and $w$ are the elements of a free submonoid generated by
$x_0$ and $x_1$, yielding that $x_nv=x_mw$ as words.
\end{proof}

\begin{lemma}
\label{lem_forest2} Let $u$ be a word from the positive monoid of the form
$u=x_nvx_1v'$, where $n\geq2$ and $v$ is a word over the alphabet
$X=\{x_0,x_1\}$ of length $n-2$. Then this word is not equal in $F$
to any other word of the form $x_mw$, where $w$ is a word over
$\{x_0,x_1\}$.
\end{lemma}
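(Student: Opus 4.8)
The plan is to adapt the forest-diagram argument of Lemma~\ref{lem_forest1}, tracking the caret $c$ corresponding to the leading $x_n$. As in that lemma, $c$ is the \emph{cherry} joining the $n$-th and $(n+1)$-st leaves, and since these two leaves are never subdivided by the later generators $x_0,x_1$, the caret $c$ survives as a cherry in the forest of $u$. The essential difference here is that $|v|=n-2$ \emph{exactly}, so after building $x_nv$ the cherry $c$ sits at root-position $2$ with exactly two trees to its left; the extra factor $x_1$ then joins the tree in position $1$ with $c$, absorbing $c$ as a child. Consequently the plain count ``$\geq 2$ trees to the left of $c$'' used in Lemma~\ref{lem_forest1} is no longer available, and I will need a finer invariant to pin down $c$.

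First I would record the precise local picture of the forest of $u$. Writing $R_0,R_1$ for the two roots lying to the left of $c$ after $x_nv$ has been built, these cover exactly the leaves $0,\dots,n-1$, and since every tree has at least one leaf, $R_0$ contains leaf $0$; hence $R_1$ does \emph{not} contain leaf $0$. Adding the caret $x_1$ (that is, joining the roots in positions $1$ and $2$) creates a new caret $c'$ whose \emph{left} child is $R_1$ and whose \emph{right} child is the cherry $c$. Because the later carets coming from $v'$ only stack new roots on top, $c'$ remains the parent of $c$, with $c$ its right child and $R_1$ its fixed left subtree, in the final forest of $u$.

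Next, suppose $x_nvx_1v'=x_mw$ in $F$ with $w$ a word over $\{x_0,x_1\}$; then the two forests coincide, and I would locate the cherry $c$ inside the construction of $x_mw$. If $c$ is the first caret, it is produced by $x_m$ and joins leaves $m,m+1$, forcing $m=n$. Otherwise $c$ is produced by a letter of $w$. It cannot be produced by $x_0$: that generator joins the roots in positions $0,1$, which would leave no root---and hence no leaf---to the left of $c$, contradicting the presence of leaf $0$ to the left of leaf $n$ (here $n\geq 2$ is used). If $c$ were produced by $x_1$, then at that moment the single root in position $0$ is a tree $U$ covering \emph{all} of the leaves $0,\dots,n-1$; and since $c$ must end up as a right child, the only admissible continuation joins position $0$ with $c$, making $U$ the left child of $c'$. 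But then the left subtree of $c'$ would be $U$, which contains leaf $0$, contradicting the fact that the left child of $c'$ is $R_1$, which does not. Hence $c$ is the first caret and $m=n$.

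The main obstacle, and the step I would treat most carefully, is this last case: I must argue that a cherry created by $x_1$ is forced to remain at root-position $1$ until it acquires a parent, so that the left child of $c'$ is unambiguously the full tree $U$. The key point is that any caret at position $0$ or $1$ applied while $c$ occupies position $1$ necessarily involves $c$---either absorbing it as a right child (via $x_0$, which makes $U$ its sibling) or, inadmissibly, as a left child (via $x_1$, which would contradict that $c$ is a right child in the final forest). Once $m=n$ is established, left-cancellation in the group $F$ gives $vx_1v'=w$ in $F$, and since $x_0,x_1$ generate a free submonoid (as already invoked in Lemma~\ref{lem_forest1}) this is an equality of words, completing the proof.
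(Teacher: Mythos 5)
Your proof is correct and follows essentially the same route as the paper: both arguments track the cherry $c$ in the forest diagram and show that, because $c$ ends up as the right child of the caret $d=c'$ whose left subtree is $R_1$ (not containing leaf $0$), the caret $c$ can only arise from the initial application of $x_m$, forcing $m=n$ and then word equality via freeness of the submonoid $\langle x_0,x_1\rangle$. Your case analysis ruling out creation of $c$ by $x_0$ or $x_1$ simply makes explicit what the paper compresses into ``analogously to Lemma~\ref{lem_forest1} \dots it must be placed before caret $d$ is placed.''
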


\begin{proof}
The rooted forest corresponding to $x_nv$ is constructed in
Lemma~\ref{lem_forest1} and shown in Figure~\ref{fig_forest2_1}.
Note, that there are exactly 2 trees (one of which is shown trivial
in Figure~\ref{fig_forest2_1}) to the left of caret $c$. At the next
step we apply generator $x_1$, which attaches the new caret $d$ that
connects the root of the second of these trees to the root of caret
$c$. The resulting forest is shown in Figure~\ref{fig_forest2_2}.

\begin{figure}[h]
\begin{center}
\includegraphics{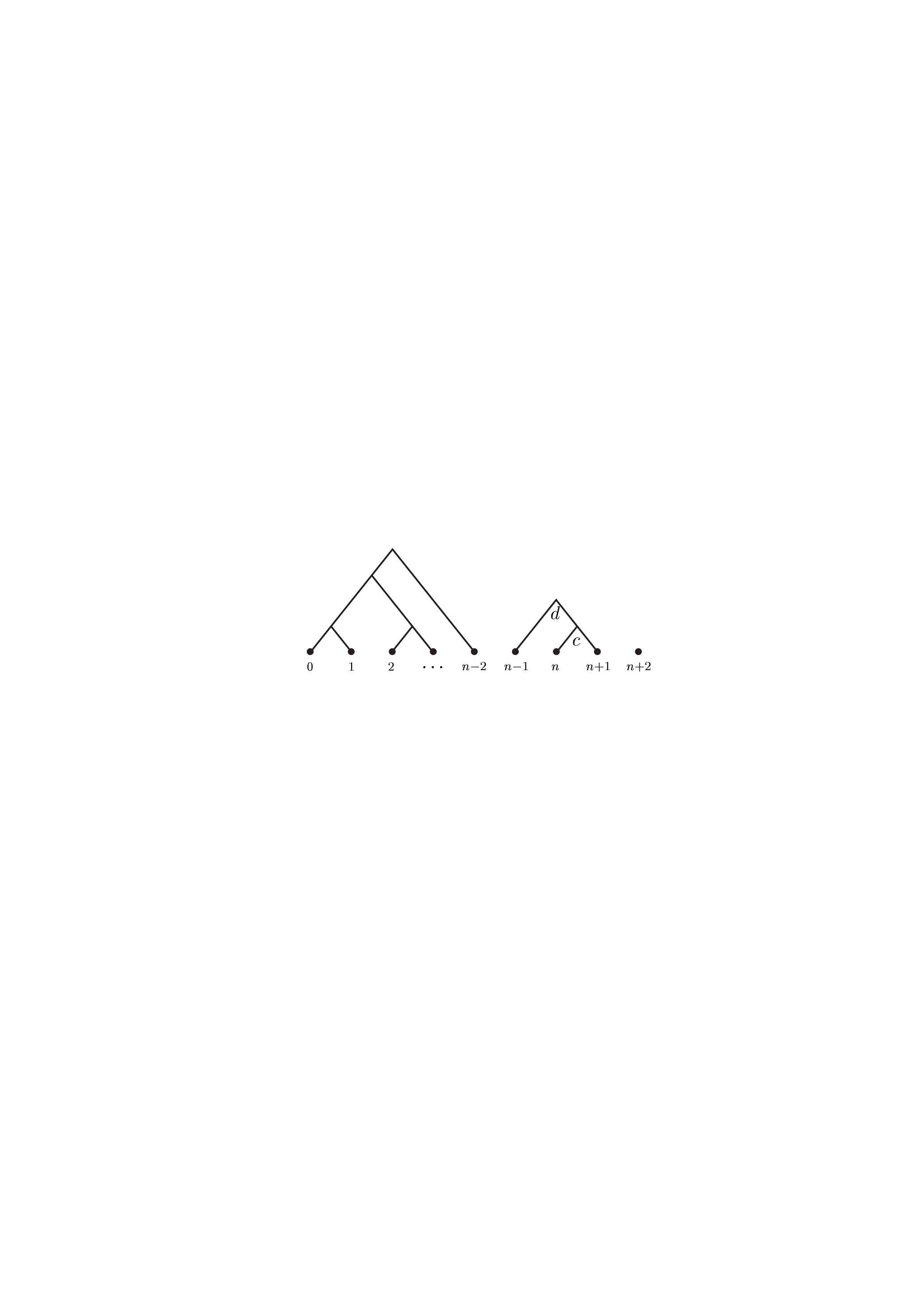}
\end{center}
\caption{Forest corresponding to $x_nvx_1$\label{fig_forest2_2}}
\end{figure}

Next, applying $v'$ adds some extra carets on top of the picture.
The final rooted forest is shown in Figure~\ref{fig_forest2_3}.

\begin{figure}[h]
\begin{center}
\includegraphics{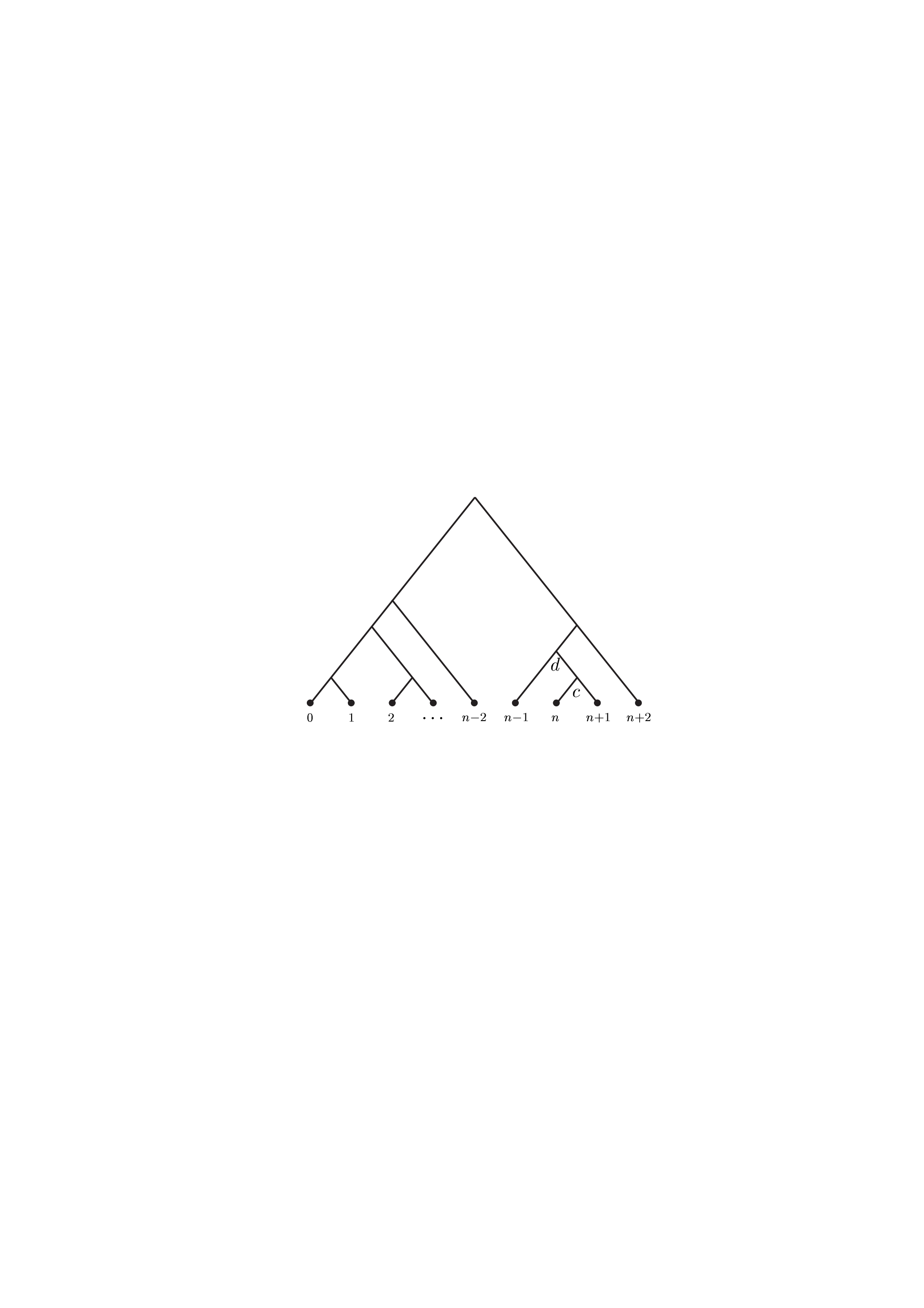}
\end{center}
\caption{Forest corresponding to $x_nvx_1v'$\label{fig_forest2_3}}
\end{figure}

Analogously to Lemma~\ref{lem_forest1} we obtain that if the rooted
forest of $x_mw$ coincides with the one of $u$, the caret $c$ could
appear only from the initial application of $x_m$ (since it must be
placed before caret $d$ is placed). Hence $x_n=x_m$ and $v=w$ as
words, because the submonoid generated by $x_0$ and $x_1$ is free.
\end{proof}

Let $\Gamma_S$ be the induced subgraph of the Cayley graph
$\Gamma_F$ of $F$ that contains all the vertices of from the set $S$
(recall the definition of $S$ in~\eqref{eqn_defn_of_S}). As a direct
corollary of Lemma~\ref{lem_forest1} and Lemma~\ref{lem_forest2}, we
can describe explicitly the structure of $\Gamma_S$ (see
Figure~\ref{fig_cayley}, where solid edges are labelled by $x_1$ and
dashed by $x_0$).

\begin{prop}
\label{prop_structure}
The structure of $\Gamma_S$ is as follows
\begin{itemize}
\item[$(a)$] $\Gamma_S$ contains the infinite binary tree $T$ corresponding to the free
submonoid generated by $x_0$ and $x_1$;
\item[$(b)$] for each $n\geq2$ there is a binary tree $T_n$ in $\Gamma_S$ consisting
of $n-2$ levels which grows from the vertex $x_n$ and does not
intersect anything else;
\item[$(c)$] Each vertex $x_nv$ of the boundary of $T_n$ (i.e. $v$ has length
$n-2$) has two neighbors $x_nvx_1$ and $x_nvx_0$ outside $T_n$. The
first one is the root of an infinite binary tree which does not
intersect anything else. The second one coincides with the vertex
$vx_0x_1$ of the binary tree $T$.
\end{itemize}
\end{prop}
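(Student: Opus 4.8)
The plan is to sort every vertex of $\Gamma_S$ according to how its normal form begins and then read the edges off right multiplication by $x_0$ and $x_1$. By~\eqref{eqn_defn_of_S} each vertex is a word $x_nu$ with $n\geq0$ and $u\in\{x_0,x_1\}^*$; when $n\in\{0,1\}$ this word lies in the free submonoid $M=\langle x_0,x_1\rangle$. For part $(a)$ I would use the freeness of $M$ (already invoked in the proofs of Lemmas~\ref{lem_forest1} and~\ref{lem_forest2}): distinct words over $\{x_0,x_1\}$ are distinct elements of $F$, so the two forward edges $w\mapsto wx_0$ and $w\mapsto wx_1$ always lead to new, distinct vertices and no cycle can close. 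Hence the induced subgraph on the vertices lying in $M$ is the infinite rooted binary tree $T$.

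For part $(b)$, fix $n\geq2$ and look at the vertices $x_nv$ with $v\in\{x_0,x_1\}^*$ and $|v|\leq n-2$. Lemma~\ref{lem_forest1} states exactly that such a vertex equals no other word of the form $x_mw$; since every vertex of $\Gamma_S$ is of that form, each $x_nv$ is distinct from every other vertex, in particular from all of $T$ and from every $T_{n'}$ with $n'\neq n$. Combined with freeness inside the block (so that $x_nv=x_nv'$ forces $v=v'$) this makes $\{x_nv:|v|\leq n-2\}$ a complete binary tree $T_n$ of depth $n-2$ rooted at $x_n$, in which an interior vertex $x_nv$ (with $|v|<n-2$) has children $x_nvx_0$ and $x_nvx_1$, and which is disjoint from the rest of $\Gamma_S$. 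For $n=2$ the tree $T_2$ collapses to the single vertex $x_2$.

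Part $(c)$ treats a boundary vertex $x_nv$ with $|v|=n-2$, whose two forward neighbors leave $T_n$. For the $x_1$-neighbor I would apply Lemma~\ref{lem_forest2} to the words $x_nvx_1v'$: each is distinct from all other vertices of the form $x_mw$, so exactly as in part $(b)$ they span an infinite binary tree rooted at $x_nvx_1$ that meets nothing else. The real content is the $x_0$-neighbor, for which I claim the identity $x_nvx_0=vx_0x_1$, placing it inside $T$. To prove it I would push the lone high letter $x_n$ rightward through $v=x_{i_1}\cdots x_{i_{n-2}}$ using~\eqref{eqn_presF} in the form $x_Nx_i=x_ix_{N-1}$, valid whenever $i<N-1$: the running index drops by one at each letter, so after the $n-2$ letters we reach $x_nv=vx_2$, and then $x_2x_0=x_0x_1$ completes the identity.

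I expect this identity to be the only genuine obstacle. Parts $(a)$ and $(b)$ and the $x_1$-branch of $(c)$ are immediate from freeness and the two lemmas, but the collapse $x_nvx_0=vx_0x_1$ requires checking that the commutation relation can be applied all the way along $v$ without the running index ever falling to the value of the current letter. This is guaranteed precisely by $|v|=n-2$: at the $j$-th step the legality condition reads $i_j<n-j$, which holds since $i_j\leq1$ and $n-j\geq2$ for every $j\leq n-2$. The same collapse can alternatively be read directly from the forest diagrams in Figures~\ref{fig_forest2_1}--\ref{fig_forest2_3}, but the algebraic push is the cleanest to justify.
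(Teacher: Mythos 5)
Your proof is correct and follows essentially the paper's own route: the paper states the proposition as a direct corollary of Lemmas~\ref{lem_forest1} and~\ref{lem_forest2}, and you use exactly those lemmas for all the disjointness claims. The one step the paper leaves implicit, the identity $x_nvx_0=vx_0x_1$ for $|v|=n-2$, you verify correctly by pushing $x_n$ through $v$ via $x_Nx_i=x_ix_{N-1}$ (legal since $i_j\leq 1<n-j$ at each step) and finishing with $x_2x_0=x_0x_1$.
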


\begin{figure}[h]
\begin{center}
\includegraphics{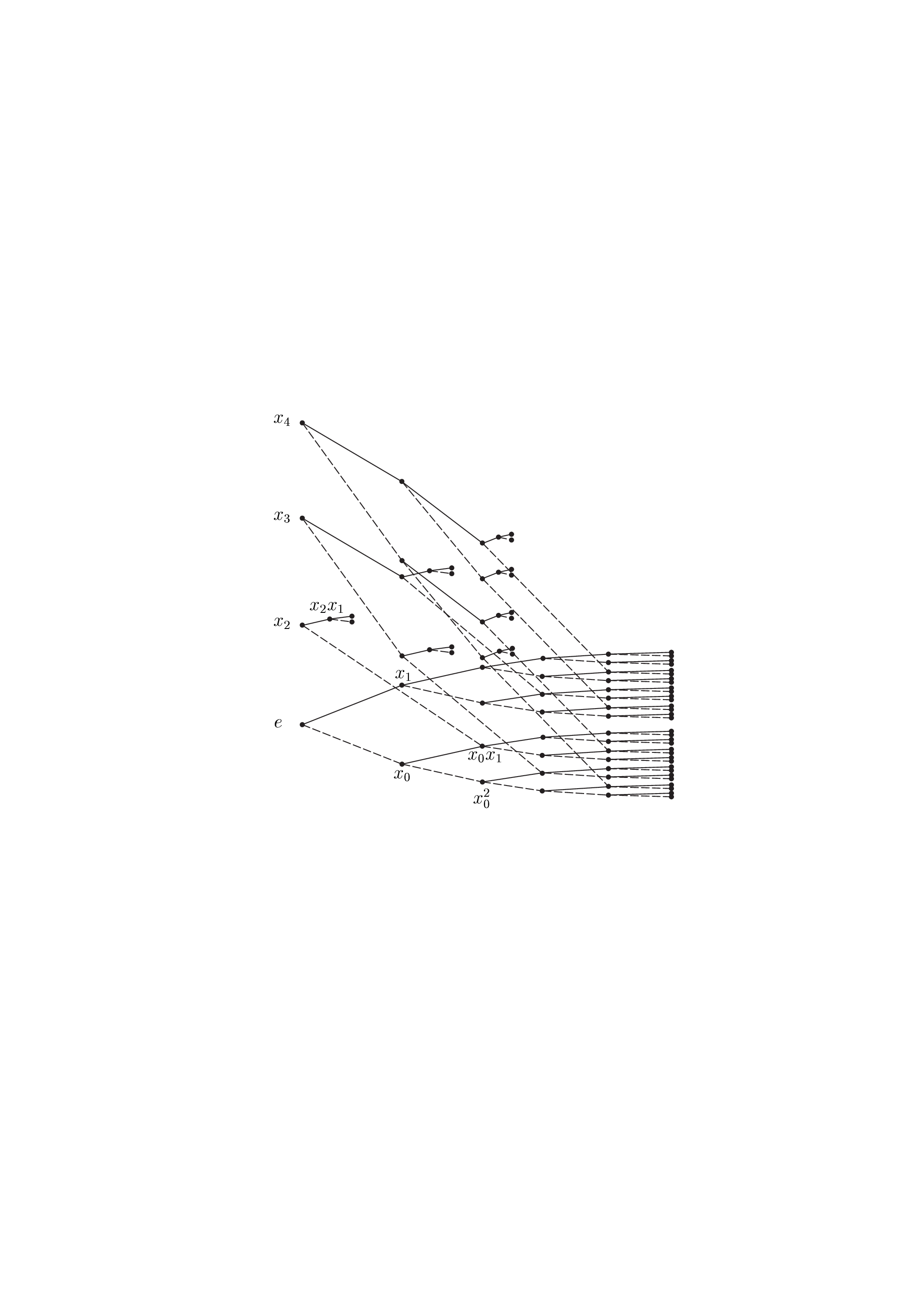}
\end{center}
\caption{Induced subgraph $\Gamma_S$ of the Cayley graph of
$F$\label{fig_cayley}}
\end{figure}

\begin{prop}
\label{nonamenab}
The graph $\Gamma_S$ is non-amenable.
\end{prop}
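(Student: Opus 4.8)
The plan is to exhibit a subgraph of $\Gamma_S$ on which I have good control of the boundary-to-volume ratio and then invoke the explicit structure from Proposition~\ref{prop_structure}. The heart of the matter is that $\Gamma_S$ contains infinitely many infinite binary trees: the main tree $T$ generated by $x_0,x_1$, and for each $n\geq 2$ an infinite binary tree hanging off the $x_1$-neighbor of each boundary vertex of $T_n$. A single infinite binary tree is already non-amenable (its Cheeger constant is bounded below), so my first instinct is that non-amenability should follow by showing these trees occupy a ``large enough'' portion of $\Gamma_S$ that the extra $x_0$-edges connecting them back into $T$ cannot drag the Cheeger constant down to zero.

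Concretely, I would estimate the Cheeger constant directly. Let $\Gamma$ be an infinite rooted binary tree where each vertex has one parent and two children (so degree $3$, except the root). For any finite set $S$ of vertices one has the standard isoperimetric bound $|\partial S|\geq c|S|$ for some absolute constant $c>0$; this is the classical statement that regular trees of degree $\geq 3$ are non-amenable. I would first recall or reprove this for the binary tree: for a finite vertex set $S$ in the tree, counting edges from $S$ to its complement and using that a binary tree has strictly more ``outgoing'' directions than a line gives a linear lower bound on $|\partial S|$ in terms of $|S|$. The cleanest route is to observe that a binary tree contains no amenable (i.e. sub-exponentially growing) Følner sequence because balls grow exponentially, and to make the Cheeger bound quantitative by the leaf-counting argument: in any finite subtree the number of leaves is comparable to the number of vertices.

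The main step is then to transfer this bound from the collection of trees to all of $\Gamma_S$. Here I would use Proposition~\ref{prop_structure} to see that $\Gamma_S$ is, up to the finite pieces $T_n$ and the ``gluing'' $x_0$-edges $x_nvx_0 = vx_0x_1$, a disjoint union of infinite binary trees joined to $T$ along a sparse set of edges. Given any finite $S\subset\Gamma_S$, I would decompose $S$ according to which tree each vertex lies in, apply the per-tree bound $|\partial S_T|\geq c|S_T|$ within each tree, and then argue that summing these local boundaries only over-counts the global boundary by the number of gluing edges incident to $S$, which is controlled because each boundary vertex of $T_n$ contributes exactly one such $x_0$-edge. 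Summing over all trees yields $|\partial_{\Gamma_S} S|\geq c'|S|$ for a uniform $c'>0$, whence $h(\Gamma_S)\geq c'>0$ and $\Gamma_S$ is non-amenable.

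The hard part will be bookkeeping at the interfaces: a vertex of $S$ might be a boundary vertex of some $T_n$ whose outgoing edges go partly into an infinite tree (where my tree bound applies) and partly back into $T$ via the identification $x_nvx_0=vx_0x_1$. I must make sure that when I sum the local tree-boundaries I do not lose a positive fraction of the true boundary, nor double-count vertices that are shared under the gluing. Since the gluing edges are labelled $x_0$ and each hanging infinite tree is attached by a single edge, I expect the over-count to be at most one edge per hanging tree intersecting $S$, which is dominated by the linear gain from the tree isoperimetry; making this precise is the only genuine obstacle, and it should be handled by a careful accounting of which edges of $\partial_{\Gamma_S}S$ are internal to a tree versus gluing edges.
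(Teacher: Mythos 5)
Your overall strategy (a direct Cheeger estimate obtained by decomposing a finite set according to the trees of Proposition~\ref{prop_structure} and summing per-tree isoperimetric bounds) is genuinely different from the paper's, which instead verifies Gromov's doubling condition by exhibiting the two explicit injective maps $f(x_nv)=x_nvx_1x_0$ and $g(x_nv)=x_nvx_1x_1$ and checking, via Proposition~\ref{prop_structure}, that their images are disjoint. However, as written your argument has a concrete gap: the trees $T_n$ are \emph{finite} (with $n-2$ levels), there are infinitely many of them of unbounded size, and the per-tree bound $|\partial S_T|\geq c|S_T|$ is simply false for a finite tree --- take $S_T=T_n$ and the internal boundary is empty. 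A candidate F{\o}lner set concentrated in $\bigcup_n T_n$ (for instance $A=T_n$ for large $n$, or the set of internal vertices of $T_n$) is therefore not handled by your decomposition at all: the infinite-tree bounds apply only to $A\cap T$ and to the intersections with the hanging trees, and for such $A$ these intersections are empty. Your phrase ``up to the finite pieces $T_n$'' hides exactly the sets on which the argument as stated proves nothing. The gap is fixable --- the leaves of $T_n$ make up more than half of $T_n$ and each leaf $x_nv$ has a \emph{private} neighbor $x_nvx_1$ (the root of its own hanging infinite tree), so sets living in $T_n$ still have linear boundary --- but this step is missing and must be supplied, since it is not a consequence of binary-tree isoperimetry.

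Two further remarks on the bookkeeping you flag as the ``only genuine obstacle.'' First, the over-counting you worry about is actually a non-issue: by Proposition~\ref{prop_structure} the vertex sets of $T$, of the $T_n$, and of the hanging trees are pairwise disjoint, so the internal boundaries $\partial_T(A\cap T)$ and $\partial_H(A\cap H)$ are disjoint subsets of $\partial_{\Gamma_S}A$ and can be added with no correction; and you never need to recover the whole boundary, only a linear lower bound. Second, the real delicacy in any one-step counting argument on $\Gamma_S$ is the identification $x_nvx_0=vx_0x_1$: a positive density of vertices of $T$ (one quarter of each level) have \emph{two} parents, which is precisely why the naive ``every vertex has two children, so $|N_1(A)|\geq 2|A|$'' count fails and why the paper's maps first pass through the $x_1$-child (which is never shared) before branching. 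If you carry out your program carefully you will find yourself reproving essentially this fact; the paper's two-step doubling maps are the streamlined version of that observation.
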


In order to prove this Proposition we will use equivalent to the
amenability doubling condition (or Gromov doubling condition)
\cite{harpe_cg:paradoxical}.

\begin{theorem}[Gromov's Doubling Condition]
\label{thm_gromov}
Let $X$ be a connected graph of bounded degree. Then $X$ is
non-amenable if and only if there is some $k\geq1$ such that for any
finite nonempty subset $S\subset V(X)$ we have
$$|\mathcal N_k(S)|\geq2|S|,$$
where $\mathcal N_k(S)$ is the set of all vertices $v$ of $X$ such
that $d_X(v,S)\leq k$.
\end{theorem}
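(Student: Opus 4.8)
The plan is to verify Gromov's doubling condition (Theorem~\ref{thm_gromov}) with the explicit constant $k=2$. Concretely, for every finite nonempty $S\subset V(\Gamma_S)$ I will exhibit a family of pairwise disjoint subsets of $\mathcal N_2(S)$ whose total size is at least $2|S|$, which forces $|\mathcal N_2(S)|\ge 2|S|$ and hence non-amenability.

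The starting point is the forest structure furnished by Proposition~\ref{prop_structure}. Forgetting the ``gluing'' edges (the $x_0$-edges joining each boundary vertex $x_nv$ of $T_n$ to the vertex $vx_0x_1$ of $T$), the remaining downward edges organize $\Gamma_S$ into a disjoint union of rooted trees: the tree $T$, and for each $n\ge 2$ the finite tree $T_n$ carrying an infinite binary tree grafted below each of its boundary vertices. In this forest every vertex $g$ has a well-defined set of children, consisting of two children except for the boundary vertices $x_nv$ of the $T_n$, which retain the single child $x_nvx_1$. Since the forest edges are genuine edges of $\Gamma_S$, the descendants of $g$ at forest-distance $2$ lie within distance $2$ of $g$ in $\Gamma_S$.

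The key combinatorial observation I will establish is that in this forest \emph{every} vertex has at least two grandchildren (descendants at forest-distance exactly $2$). For the vertices with two children this is immediate. For a one-child vertex $x_nv$ the unique child is $x_nvx_1$, which by Proposition~\ref{prop_structure}(c) is the root of an infinite binary tree and therefore branches into two children; hence $x_nv$ again has two grandchildren. Writing $D(g)$ for the set of grandchildren of $g$, we thus have $|D(g)|\ge 2$ for all $g$. Moreover, because we are working in a forest, each vertex has at most one ancestor at distance $2$, so the sets $D(g)$ are pairwise disjoint as $g$ ranges over the vertices. Consequently, for any finite $S$,
\[
|\mathcal N_2(S)|\;\ge\;\Bigl|\bigcup_{g\in S}D(g)\Bigr|\;=\;\sum_{g\in S}|D(g)|\;\ge\;2|S|,
\]
and Theorem~\ref{thm_gromov} (applied with $k=2$, noting that $\Gamma_S$ is connected and has degree at most $4$) yields that $\Gamma_S$ is non-amenable.

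A word on why the constant $2$ rather than $1$, and on where the difficulty lies. The finite trees $T_n$ terminate in the one-child boundary vertices, so a set such as $S=T_n$ has $|\mathcal N_1(S)|<2|S|$ and the doubling fails at $k=1$; the expansion concealed in the grafted infinite trees only becomes visible one step further down, which is exactly what passing to grandchildren captures. The main point to check with care is therefore the uniform lower bound $|D(g)|\ge 2$ at the defect (one-child) vertices, i.e.\ that each boundary vertex of every $T_n$ is genuinely followed by a branching infinite tree rather than a single descending path; this is precisely the content of Proposition~\ref{prop_structure}(c) that must be used in full, together with the observation that removing the gluing edges is harmless because it only lengthens distances and hence enlarges $\mathcal N_2(S)$.
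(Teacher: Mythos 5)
Your proposal does not prove the statement under review. Theorem~\ref{thm_gromov} is a general equivalence, for an \emph{arbitrary} connected graph $X$ of bounded degree, between non-amenability ($h(X)>0$) and the existence of some $k\geq1$ with $|\mathcal N_k(S)|\geq 2|S|$ for all finite nonempty $S$. What you establish instead is that the particular graph $\Gamma_S$ satisfies the doubling condition with $k=2$; that is, you prove Proposition~\ref{nonamenab}, taking Theorem~\ref{thm_gromov} itself as a black box (``Theorem~\ref{thm_gromov} \ldots yields that $\Gamma_S$ is non-amenable''). Neither direction of the equivalence is addressed. The direction you implicitly rely on (doubling implies $h(X)>0$) needs the bounded-degree hypothesis: if all degrees are at most $d$, every vertex of $\mathcal N_k(S)\setminus S$ lies within distance $k-1$ of the vertex boundary $\partial S$, so $|\mathcal N_k(S)|\leq |S|+C(d,k)\,|\partial S|$ with $C(d,k)$ the maximal size of a ball of radius $k-1$; doubling then forces $|\partial S|\geq |S|/C(d,k)$, i.e.\ $h(X)>0$. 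The converse (non-amenability implies doubling for some $k$) follows by iterating $|\mathcal N_1(S)|=|S|+|\partial S|\geq(1+h)|S|$ to get $|\mathcal N_k(S)|\geq(1+h)^k|S|$ and choosing $k$ with $(1+h)^k\geq2$. None of this appears in your write-up. (For the record, the paper does not prove Theorem~\ref{thm_gromov} either; it cites it from~\cite{harpe_cg:paradoxical}.)

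That said, read as a proof of Proposition~\ref{nonamenab}, your argument is essentially correct and close in spirit to the paper's: the paper defines two injective maps $f(x_nv)=x_nvx_1x_0$ and $g(x_nv)=x_nvx_1x_1$ with disjoint images, each moving a vertex by distance $2$, and your disjoint grandchild sets $D(g)$ in the forest obtained by deleting the gluing edges capture exactly the same expansion (in both cases the crucial input is Proposition~\ref{prop_structure}, in particular that each boundary vertex $x_nv$ of $T_n$ is followed by a genuinely branching infinite tree, and that the various trees do not intersect, which is what makes the deleted-edge graph a forest and your sets $D(g)$ pairwise disjoint). So the combinatorics you wrote would serve as an alternative proof of Proposition~\ref{nonamenab}; it is simply not a proof of Theorem~\ref{thm_gromov}, which is the statement you were asked to prove.
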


\begin{proof}[Proof of Proposition \ref{nonamenab}]
In order to use the Theorem~\ref{thm_gromov} it is enough to
construct two injective maps $f,g: V(X)\to V(X)$ with distinct
images, that do not move vertices farther than by distance $k$.

For any vertex $x_nv$ in $S$ put
$$f(x_nv)=x_nvx_1x_0,$$
$$g(x_nv)=x_nvx_1x_1.$$

For any vertex $x_nv$ of $S$ we have $d(x_nv,f(x_nv))=2$ and
$d(x_nv,g(x_nv))=2$, so the last condition of
Theorem~\ref{thm_gromov} is satisfied.

The relation $f(x_nv)=f(x_mw)$ implies $x_nvx_1x_0=x_mwx_1x_0$ and
$x_nv=x_mw$. Hence $f$ is an injection. The same is true for $g$.

Now suppose $f(x_nv)=g(x_mw)$ or, equivalently,
\begin{equation}
\label{eqn_glue}
x_nvx_1x_0=x_mwx_1x_1
\end{equation}

The words $x_nvx_1$ and $x_mwx_1$ represent different vertices in
$\Gamma_S$ since otherwise we would get $x_0=x_1$. According to
Proposition~\ref{prop_structure} the equality~\eqref{eqn_glue} is
possible only in case when $x_nvx_1$ is a vertex of the boundary of
$T_n$ and $x_mwx_1$ is a vertex of $T$. But by
Proposition~\ref{prop_structure}$(c)$ in this case the vertex
$x_nvx_1x_0$ coincides with the vertex $vx_1x_0x_1$ of $T$ which can
not coincide with $x_mwx_1x_1$. Indeed, otherwise we get
\[vx_1x_0=x_mwx_1.\]
Then $vx_1$ and $x_mw$ must represent different vertices of
$\Gamma_S$. According to Proposition~\ref{prop_structure} the last
equality  may occur only in case when $vx_1$ belongs to the boundary
of tree $T_r$ for some $r\geq2$, which is not the case because
$vx_1\in T$. Therefore the equality~\eqref{eqn_glue} is never
satisfied and the images of $f$ and $g$ are distinct.

Thus by Theorem~\ref{thm_gromov} the graph $\Gamma_S$ is
non-amenable.
\end{proof}

\bibliographystyle{alpha}
\def\cprime{$'$} \def\cprime{$'$} \def\cprime{$'$} \def\cprime{$'$}
  \def\cprime{$'$}

\noindent\texttt{savchuk@math.tamu.edu}

\noindent
Department of Mathematics \\Texas A\&M University\\
College Station, TX 77843-3368 \\USA

\end{document}